\definecolor{bleufonce}{rgb}{0,0,0.4}
\newtheorem{theorem}{Theorem\!\!}
\newtheorem{prop}{Proposition}
\newtheorem{rem}{Remark}
\newtheorem{lemma}{Lemma\!\!}
\newtheorem{conjecture}{Conjecture\!\!}
\newcommand{\R}{\mathbb{R}}
\newcommand{\C}{\mathbb{C}}
\newcommand{\E}[1]{\mathbf E\left(#1\right)}
\newcommand{\la}{\lambda}
\newcommand{\al}{\alpha}
\newcommand{\e}{\varepsilon}
\newcommand{\et}{\text{\:}\text{ \normalfont and }\text{\:}}
\newcommand{\elaw}{\overset{\mathrm d}{=}}
\newcommand{\Ga}{\Gamma}
\newcommand{\ga}{\gamma}
\newcommand{\Exp}{\operatorname{Exp}}
\newcommand{\im}{\operatorname{Im}}
\newcommand{\lp}{\left(}
\newcommand{\rp}{\right)}
\DeclareMathOperator{\CM}{CM}
\DeclareMathOperator{\HCM}{HCM}
\DeclareMathOperator{\GGC}{GGC}
\DeclareMathOperator{\re}{Re}
\begin{document}

\title{HCM property and the Half-Cauchy distribution}

\author{Pierre Bosch}

\address{Laboratoire Paul Painlev\'e, Universit\'e Lille 1, Cit\'e Scientifique, F-59655 Villeneuve d'Ascq Cedex. {\em Email} : {\tt pierre.bosch@ed.univ-lille1.fr}}

\keywords{Half-Cauchy distribution - complete monotonicity - generalized gamma convolution - hyperbolically completely monotone - pick function - positive stable density}

\subjclass[2010]{60E07, 60E15}

\begin{abstract}
Let $Z_\al$ be a positive $\alpha$-stable random variable and $T_\al=(Z_\al/\tilde Z_\al)^\al,$ with independents components in the quotient. It is known that $T_\al$ is distributed as the positive branch of a Cauchy random variable with drift. We show that the density of the power transformation $T_\al^\beta$ is hyperbolically completely monotone in the sense of Thorin and Bondesson if and only if $\al\le1/2$ and $|\beta|\ge 1/(1-\al).$ This clarifies a conjecture of Bondesson (1992) on positive stable densities.
\end{abstract}

\maketitle

\section{Introduction}
A function $f:(0,\infty)\to(0,\infty)$ is said to be hyperbolically completely monotone (HCM) if for every $u>0$, the function $f(uv)f(u/v)$ is completely monotone (CM) as a function of the variable $w=v+v^{-1}$. This class coincides with that of functions of the form 
\begin{equation}
\label{prodHCM}
cx^a \prod_{i=1}^n(1 +c_ix)^{-b_i}
\end{equation}
with $a\in\R$ and $c, c_i, b_i >0,$ or pointwise limits thereof.
A positive random variable $X$ is called HCM if it has a density which is $\HCM$. The HCM class is closed with respect to multiplication and
division of independent random variables. Moreover, if $X$ is $\HCM$ then $X^\beta$ is $\HCM$ for every $|\beta|\ge1$. This class was introduced by O. Thorin and L. Bondesson and is closely connected to the class of generalized gamma convolutions (GGC). We say that the distribution of a positive random variable $X$ is a $\GGC$ if its Laplace transform reads
\begin{equation}\label{GGC}
\E{e^{-\la X}}=\exp\left(-a\la-\int_0^\infty \log\left(1+\frac\la x\right)\nu(dx)\right)
\end{equation}
for some $a\ge0$ called the drift coefficient and some positive measure $\nu$ called the Thorin measure, which is such that
\[
\int_0^1|\log(x)|\nu(dx)<\infty \et \int_1^\infty x^{-1}\nu(dx)<\infty.
\]
The GGC class is a subclass of the positive self-decomposable (SD) distributions, and in particular all GGC distributions are infinitely divisible (ID). In \cite{Article:Bondesson:HCMGGC} Bondesson proved the inclusion $\HCM\subset \GGC$, which allows to show the GGC property and hence the infinite divisibility of many positive distributions whose Laplace transforms are not explicit enough. As a genuine  example, in \cite{Article:Thorin:Bondesson} O. Thorin proves the infinite divisibility of powers of a gamma random variable at order $\xi$ with $|\xi|\ge1$.  This is also an easy consequence to the fact that gamma densities are $\HCM$. In fact Thorin uses the HCM-idea in a primitive form in his paper.

Another link between the two above classes is that a random variable $X$ is GGC if and only if its Laplace transform is $\HCM$. This characterization, which is also due to Bondesson, can be used to show both GGC and HCM properties, and it will play some role in the proof of the main result of this paper. We refer to the monograph \cite{Livre:Bondesson} for an account on these topics, including the proof of all above properties.

Let $Z_\al$ be a positive $\al$-stable random variable, $\al\in(0,1)$, normalized such that its Laplace transform reads 
\[
\E{e^{-\la Z_\al}} = \exp\left({-\la^\al}\right)=\exp\left(-\frac{\al\sin(\al\pi)}{\pi}\int_0^\infty\log\left(1+\frac\la x\right)x^{\al-1}dx\right).
\]
Observe that this Laplace tranform is of the form (\ref{GGC}), so that all positive $\al$-stable distributions are GGC. In this paper we are concerned with the following 

\begin{conjecture} [Bondesson] The density of $Z_\al$ is $\HCM$ if and only if $\al\le1/2.$ 
\end{conjecture}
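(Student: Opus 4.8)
The plan is to prove both implications through the defining property: $f_\al$ is $\HCM$ precisely when, for each fixed $u>0$, the map $w=v+v^{-1}\mapsto f_\al(uv)f_\al(u/v)$ is $\CM$. Since the density $f_\al$ of $Z_\al$ is not explicit, I would first reduce the problem to a more tractable variable. Using Bochner subordination of the $\al$-stable subordinator one obtains the multiplicative identity $Z_{\al\be}\elaw Z_\al\,Z_\be^{1/\al}$ for independent factors, and in particular, inspecting Mellin transforms, $\E{(Z_\al^{-\al})^s}=\Ga(1+s)/\Ga(1+\al s)$. Because $\HCM$ is stable under $X\mapsto X^{p}$ whenever $|p|\ge1$ and here $Z_\al=(Z_\al^{-\al})^{-1/\al}$ with $1/\al\ge2$, it suffices for the sufficiency direction to prove that $M_\al:=Z_\al^{-\al}$ is $\HCM$ for $\al\le1/2$.

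For the anchor cases I would check $\al=1/2$ directly: there $f_{1/2}(x)=c\,x^{-3/2}e^{-1/(4x)}$, and $f_{1/2}(uv)f_{1/2}(u/v)=c^2u^{-3}e^{-w/(4u)}$ is visibly $\CM$ in $w$, so $Z_{1/2}$ is $\HCM$. More generally, for $\al=1/n$ the Gauss multiplication formula turns $\E{Z_{1/n}^s}=\Ga(1-ns)/\Ga(1-s)$ into a finite product $c\,n^{-ns}\prod_{j=1}^{n-1}\Ga(j/n-s)$, i.e.\ a product of independent inverse-gamma type factors, each of which is $\HCM$; since the $\HCM$ class is closed under products of independent variables, $Z_{1/n}$ is $\HCM$. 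These discrete anchors, together with closure of $\HCM$ under pointwise limits of densities and the continuity of $f_\al$ in $\al$, suggest the final value $1/2$, but they do not by themselves reach every $\al\in(0,1/2]$: the identity $Z_\al\elaw Z_{1/n}\,Z_{n\al}^{n}$ always throws the companion index $n\al$ back into $(1/2,1]$, so a purely algebraic bootstrap is circular.

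The genuine content, and the step I expect to be the main obstacle, is therefore a direct analytic verification valid for the whole range $\al\le1/2$. I would seek a Stieltjes-type integral representation of $f_\al$ (equivalently of $M_\al$), obtained by deforming the Mellin--Barnes contour for $\Ga(1+s)/\Ga(1+\al s)$, and then compute $f_\al(uv)f_\al(u/v)$ as a function of $w$. The aim is to exhibit this product as $\int_0^\infty e^{-w t}\,\mu_{u,\al}(dt)$ for a positive measure $\mu_{u,\al}$, which is exactly $\CM$-ness in $w$; the mass of $\mu_{u,\al}$ should be governed by a kernel carrying trigonometric factors of the form $\sin(\pi\al k)$ coming from the reflection formula for the Gamma quotient. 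The crux is a single sign condition: this kernel stays nonnegative for all the relevant arguments precisely when $\al\le1/2$, and develops a sign change once $\al>1/2$ (the first offending factor being $\sin(2\pi\al)<0$). Pinning down this representation and proving the positivity uniformly in $u$ is where the real work lies; the Pick-function machinery and the $\GGC\Leftrightarrow$ ($\HCM$ Laplace transform) characterization recalled above are the natural tools for controlling the relevant analytic continuation.

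For necessity I would run the same computation in reverse: for $\al>1/2$ the sign change of the governing kernel yields some $u>0$ for which $w\mapsto f_\al(uv)f_\al(u/v)$ fails to be $\CM$, so $f_\al$ cannot be $\HCM$. Equivalently, one can test the weaker necessary condition that $t\mapsto\log f_\al(e^t)$ be concave --- forced by the canonical form $c x^a\prod(1+c_ix)^{-b_i}$, which is log-concave in $\log x$ and stays so under pointwise limits --- and show it breaks down for $\al>1/2$ because of the oscillatory signs $\sin(\pi\al k)$ in the convergent tail expansion of $f_\al$. Either way, the threshold $1/2$ enters through the same trigonometric positivity, so the two directions are two faces of one sign analysis.
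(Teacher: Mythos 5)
The statement you have been asked to prove is not proved in the paper: it is Bondesson's conjecture, which the paper explicitly records as open (the if part is known only for $\al=1/n$ via the gamma factorization (\ref{factoza}), plus the ranges announced in \cite{Preprint:Fourati}); the paper's actual theorem concerns $T_\al^\beta=(Z_\al/\tilde Z_\al)^{\al\beta}$, not $Z_\al$ itself. Your proposal does not close this gap either: the entire burden of the sufficiency direction is placed on an unproved positivity claim for a Mellin--Barnes kernel, which you yourself flag as ``where the real work lies.'' What you have written is a research plan, not a proof, and the same is true of the necessity direction, where the asserted failure of log-concavity of $t\mapsto\log f_\al(e^t)$ for $\al>1/2$ is exactly the (nontrivial) hyperbolic monotonicity theorem of \cite{Article:Simon:MSU}, cited but not reproved.

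There is also a concrete flaw in the one reduction you do carry out. You propose that it suffices to show $M_\al:=Z_\al^{-\al}$ is $\HCM$ for $\al\le1/2$. The implication runs the right way logically ($M_\al$ $\HCM$ would give $Z_\al=M_\al^{-1/\al}$ $\HCM$ since $1/\al\ge1$), but the target statement is false for every $\al\in(0,1)$, so this route cannot succeed. At $\al=1/2$ one has $Z_{1/2}^{-1/2}\elaw 2\sqrt{\ga_{1/2}}$, and as recalled in the paper $\sqrt{\ga_t}$ is not even infinitely divisible because of its superexponential tails, hence not $\HCM$. More generally, if $Z_\al^{-\al}$ were $\HCM$ then so would be the independent quotient $(Z_\al/\tilde Z_\al)^{-\al}=T_\al^{-1}$, contradicting the paper's main theorem, which requires $|\beta|\ge1/(1-\al)>1$ for $T_\al^{\beta}$ to be $\HCM$; indeed the refined conjecture stated in the introduction (whose only-if part the paper does establish) pins the admissible powers of $Z_\al$ to $|\beta|\ge\al/(1-\al)>\al$. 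The parts of your argument that are sound --- the explicit check at $\al=1/2$, the gamma-product factorization at $\al=1/n$, and the observation that the multiplicative bootstrap $Z_\al\elaw Z_{1/n}\,Z_{n\al}^{n}$ is circular --- are precisely the known partial results; nothing in the proposal goes beyond them.
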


This problem is stated in \cite{Report:Bondesson}, where the easy only if part is also obtained. If $\al = 1/n$ for some integer $n \ge 2,$ the HCM property for $Z_{1/n}$ is  a direct consequence of the independent factorization (see Example 5.6.2 in \cite{Livre:Bondesson})
\begin{equation}\label{factoza}
Z_{1/n}^{-1}\;\elaw\; n^n \gamma_{1/n}\times\dots\times\gamma_{(n-1)/n}
\end{equation}
where, here and throughout, $\gamma_t$ denotes a gamma random variable with shape parameter~$t$ and explicit density 
$$\frac{x^{t-1}e^{-x}}{\Ga(t)}\cdot$$ 
The if part of this conjecture is however still open when $\al\ne1/n$. In \cite{Article:Simon:MSU}, it is shown that $Z_\al$ is hyperbolically monotone (viz. its density $f$ is such that $f(uv)f(u/v)$ is non-increasing in the variable $v +1/v$) if and only if $\al\le1/2$. Proposition 4 of \cite{Article:JedidiSimon} shows that  quotient $Z_\al/\tilde Z_\al$ (with  independents components) has an HCM density if and only if $\al\le1/2.$ We refer to the whole article \cite{Article:JedidiSimon} for other partial results on Bondesson's conjecture. Last, a positive answer to the if part for $\al\in(0,1/4]\cup [1/3,1/2]$ has been recently announced in \cite{Preprint:Fourati}.

In this paper we consider the random variable 
$$T_\al \;= \;\left(\frac{Z_\al}{\tilde Z_\al}\right)^\al$$ where $Z_\al\perp \tilde Z_\al$. 
It is well-known (see \cite{Article:Zolotarev} or Exercise 4.21 in \cite{Livre:ChaumontYor}) that $T_\al$ has an explicit density which is that of a drifted Cauchy random variable conditioned to be positive, viz. it is given by 
\begin{equation}\label{DensiteTal}
\frac{\sin(\pi\al)}{\pi \al(x^2+2\cos(\pi\al)x +1)}\cdot
\end{equation}
When $\al = 1/2$, the above random variable is the half-Cauchy, whose infinite divisibility has been obtained in \cite{Article:Bondesson:IDC}. This result has been refined into self-decomposability in \cite{Article:Didier}. On the other hand, it is shown in \cite{Livre:Bondesson} that $T_\al$ has never a GGC distribution, and in particular does not have an HCM density. Our main result shows that this property holds when taking sufficiently high power transformations.

\begin{theorem}
The power transformation $T_\al^\beta$ has a $\HCM$ density if and only if $\al\le1/2$ and $|\beta|\ge 1/(1-\al)$.
\end{theorem}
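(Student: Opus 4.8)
The plan is to work throughout with the explicit density and the defining test for $\HCM$. First I would record two reductions. Since $Z_\al\perp\tilde Z_\al$ are i.i.d.\ one has $T_\al\elaw 1/T_\al$, so the density (\ref{DensiteTal}) satisfies $f(1/x)=x^2f(x)$ and, because $\HCM$ is stable under $x\mapsto 1/x$, it suffices to treat $\be>0$. Writing $b=1/\be$, the density of $Y=T_\al^\be$ is, up to a positive constant,
\[
g(x)=\frac{x^{b-1}}{x^{2b}+2\cos(\pi\al)\,x^{b}+1}=\frac{x^{b-1}}{(x^{b}+e^{i\pi\al})(x^{b}+e^{-i\pi\al})}.
\]
The easy half of the ``only if'' is then immediate from analyticity: every $\HCM$ function extends holomorphically to $\C\setminus(-\infty,0]$, whereas $g$ has poles at $x^{b}=e^{\pm i\pi(1-\al)}$, i.e.\ at $x=e^{\pm i\pi(1-\al)/b}$. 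These lie inside the cut plane precisely when $\pi(1-\al)/b<\pi$, that is $b>1-\al$; hence for every $\al$ and every $|\be|<1/(1-\al)$ the density is not $\HCM$.

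For the rest I would set up the hyperbolic test. Fix $u>0$ and put $v=e^{\xi}$, $u=e^{\eta}$, $w=v+1/v=2\cosh\xi$. A direct computation, grouping the four linear factors of the denominator of $g(uv)g(u/v)$ and using $\cosh X+\cosh Y=2\cosh\frac{X+Y}2\cosh\frac{X-Y}2$ together with $\cosh y+\cos\theta=2\cosh\frac{y+i\theta}2\cosh\frac{y-i\theta}2$, yields
\[
g(uv)g(u/v)=\frac{C}{\bigl(\cos\pi\al+\cosh b(\xi+\eta)\bigr)\bigl(\cos\pi\al+\cosh b(\xi-\eta)\bigr)}=:C\,\Phi(\xi),
\]
with $C=C(u)>0$ independent of $v$. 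Thus $g$ is $\HCM$ if and only if, for every $\eta\in\R$, the function $\Phi$ is $\CM$ in the variable $w=2\cosh\xi$ on $(2,\infty)$.

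The ``if'' part splits. When $b\le\al$ there is nothing new: the power $\delta=\al/b\ge1$, and since $Z_\al/\tilde Z_\al$ is $\HCM$ for $\al\le1/2$ by Proposition~4 of \cite{Article:JedidiSimon}, stability of $\HCM$ under $|\delta|\ge1$ powers gives that $Y=(Z_\al/\tilde Z_\al)^{\delta}$ is $\HCM$. The genuinely new range is $\al<b\le1-\al$ (nonempty only for $\al<1/2$), where $\delta=\al/b\in[\al/(1-\al),1)$ and the generic power rule no longer applies. Here I would prove the $\CM$ of $\Phi$ by producing an explicit nonnegative representing measure. The two ingredients are: (i) $w\mapsto\operatorname{arccosh}(w/2)$ is a Bernstein function on $(2,\infty)$, since its derivative $\bigl((w-2)(w+2)\bigr)^{-1/2}$ is a product of the two $\CM$ functions $(w\mp2)^{-1/2}$, whence $e^{-s\operatorname{arccosh}(w/2)}$ is $\CM$ for every $s\ge0$; and (ii) the expansion $\bigl(\cos\pi\al+\cosh by\bigr)^{-1}=\frac{2}{\sin\pi\al}\sum_{n\ge1}\sin\!\bigl(n\pi(1-\al)\bigr)e^{-nb|y|}$. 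Substituting (ii) in both factors of $\Phi$ and collecting the exponentials $e^{-kb\xi}$, each of which is $\CM$ in $w$ by (i), reduces the claim to the nonnegativity of the resulting coefficient measure. The hard part of the whole argument is exactly this positivity: the individual coefficients $\sin(n\pi(1-\al))$ oscillate, so one must show that after forming the product, resumming, and treating the range $0<\xi<|\eta|$ (where the naive expansion of one factor diverges) by analytic continuation, the net representing measure of $\Phi$ is nonnegative. I expect both hypotheses to enter structurally here: $\cos\pi\al\ge0$ (that is $\al\le1/2$) fixes the sign of the dominant contributions, while $b\le1-\al$ keeps all singularities of $\Phi$ on the cut $(-\infty,2]$ of the $w$-plane, which is what lets the Cauchy/Stieltjes inversion be carried out and its output be a genuine positive measure.

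Finally, for the remaining ``only if'', namely $\al>1/2$ with $|\be|\ge1/(1-\al)$ (so $b\le1-\al$, where the analyticity obstruction of the first paragraph is absent), I would run the same representing-measure computation for $\Phi$ and show it cannot be nonnegative: with $\cos\pi\al<0$ the sign of the jump of $\Phi$ across $(-\infty,2]$ is reversed near the inner edge, forcing a negative part in the measure and so violating $\CM$ for a suitable $\eta$. Combined with the analyticity argument for $|\be|<1/(1-\al)$, this shows $T_\al^\be$ is never $\HCM$ when $\al>1/2$. The main obstacle throughout is the positivity in the range $\al<b\le1-\al$: it is the only place where the sharp constant $1/(1-\al)$—as opposed to the weaker $1/\al$ produced by applying the power rule directly to the $\HCM$ quotient $Z_\al/\tilde Z_\al$—is genuinely used.
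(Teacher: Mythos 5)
Your reductions are sound: the symmetry $T_\al\elaw 1/T_\al$, the passage to $b=1/\be$, the identification of the poles $e^{\pm i\pi(1-\al)/b}$ giving the necessity of $b\ge 1-\al$, and the rewriting of $g(uv)g(u/v)$ as $C\bigl(\cos\pi\al+\cosh b(\xi+\eta)\bigr)^{-1}\bigl(\cos\pi\al+\cosh b(\xi-\eta)\bigr)^{-1}$ are all correct, as is the observation that the range $\be\ge 1/\al$ follows from the known $\HCM$ property of $Z_\al/\tilde Z_\al$ plus the power rule. But the core of the theorem --- complete monotonicity of $\Phi$ in $w=2\cosh\xi$ for the critical range $\al<b\le 1-\al$ --- is not proved; it is only reformulated. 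Your expansion $\bigl(\cos\pi\al+\cosh by\bigr)^{-1}=\frac{2}{\sin\pi\al}\sum_{n\ge1}\sin(n\pi(1-\al))e^{-nb|y|}$ has coefficients of both signs, the product of the two series must be resummed with no evident positivity, and for $0<\xi<|\eta|$ the factor involving $|\xi-\eta|$ expands in \emph{growing} exponentials $e^{+nb\xi}$, so the proposed representation $\sum_k c_k e^{-kb\,\operatorname{arccosh}(w/2)}$ is not even a well-defined candidate on all of $w>2$. You acknowledge this is ``the hard part,'' but that hard part is exactly the statement to be proved; deferring it to ``analytic continuation'' and an unverified sign analysis leaves the proof with no content precisely where the sharp constant $1/(1-\al)$ must enter. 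The same objection applies to your treatment of the remaining ``only if'' case $\al>1/2$, $b\le1-\al$, which again rests on the uncomputed representing measure --- and here a one-line argument is available: by the reduction, $\HCM$ of the density forces $f_{\al,b}(x)=(x^{2b}+2\cos(\pi\al)x^b+1)^{-1}$ to be $\CM$ (it is $\HCM$ with positive limit at $0^+$), yet $\cos(\pi\al)<0$ makes it locally increasing at $0^+$.

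For comparison, the paper does not attack the hyperbolic test directly. It first proves that $f_{\al,1-\al-\e}$ is $\CM$ by a contour/Laplace-inversion argument: the inverse Laplace transform is written as $-\frac1\pi\int_0^\infty\im\bigl[1/P_\al(x^{1-\al-\e}e^{i(1-\al-\e)\pi})\bigr]e^{-\la x}dx$, the integrand is shown to change sign exactly once from negative to positive, and an elementary lemma (single sign change plus total integral $\le0$, the latter obtained by letting $c\to\infty$ in the contour) yields nonnegativity for all $\la$. Then, since $f_{\al,1-\al-\e}(0)=1$, it is the Laplace transform of a probability measure, and the $\GGC$ property of that measure is verified via the Pick criterion $\im(z)>0\Rightarrow\im(f'/f)(z)\ge0$, reduced by the minimum principle for harmonic functions to an explicit computation on the negative half-line; Bondesson's theorem that Laplace transforms of $\GGC$s are $\HCM$ then closes the argument. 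If you want to salvage your direct approach you would need a genuinely new positivity mechanism for the product of the two oscillating series; absent that, the proof is incomplete.
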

Whereas the only if part of this theorem is a direct consequence of known analytical properties of HCM functions, the if part is more involved and relies on Laplace inversion and a Pick function characterization. This result shows that the explicit density of $T_\al^\beta$ is the pointwise limit of functions of the type (\ref{prodHCM}) as soon as $\al\le1/2$ and $|\beta|\ge 1/(1-\al),$ but we could not find any constructive argument for that. 

The main interest of our theorem is to propose a refined version of Bondesson's conjecture, from the point of view of power transformations. It is indeed natural to raise the further 
 
\begin{conjecture} The density of $Z_\al^\beta$ is $\HCM$ if and only if $\al\le1/2$ 
and $|\beta|\ge \al/(1-\al).$ 
\end{conjecture}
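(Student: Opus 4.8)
The plan is to settle the ``only if'' direction as a clean corollary of the main Theorem, to reduce the ``if'' direction to a single boundary exponent, and then to identify that boundary case as the genuine obstruction.

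For the ``only if'' part I would argue as follows. Suppose $Z_\al^\be$ has an $\HCM$ density and let $\tilde Z_\al$ be an independent copy. Then $\tilde Z_\al^\be$ is $\HCM$ as well, and since the $\HCM$ class is closed under division of independent variables, the quotient $Z_\al^\be/\tilde Z_\al^\be=(Z_\al/\tilde Z_\al)^\be$ is $\HCM$. But $(Z_\al/\tilde Z_\al)^\be=T_\al^{\be/\al}$, so the ``only if'' half of the main Theorem forces $\al\le1/2$ and $|\be/\al|\ge1/(1-\al)$, that is $\al\le1/2$ and $|\be|\ge\al/(1-\al)$. This establishes the ``only if'' direction completely and unconditionally, and is the point at which the present conjecture is genuinely anchored to the theorem of this paper.

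For the ``if'' part I would first reduce to the boundary exponent $\be_0=\al/(1-\al)$. Once $Z_\al^{\be_0}$ is known to be $\HCM$ for every $\al\le1/2$, any $\be$ with $|\be|\ge\be_0$ can be written $\be=\be_0\,\xi$ with $|\xi|\ge1$ (where $\xi$ may be negative), and since $Z_\al^\be=(Z_\al^{\be_0})^\xi$, stability of the $\HCM$ class under powers of modulus at least one, recalled in the introduction, yields that $Z_\al^\be$ is $\HCM$. Thus the whole content reduces to proving that $Z_\al^{\al/(1-\al)}$ is $\HCM$ whenever $\al\le1/2$. To attack this case I would work in Mellin space, where $\E{(Z_\al^{\al/(1-\al)})^s}=\Ga(1-s/(1-\al))/\Ga(1-\al s/(1-\al))$, and try to run the Laplace-inversion and Pick-function scheme used for the main Theorem, the essential new difficulty being that, unlike $T_\al$, the variable $Z_\al$ has no closed-form density, so the needed analytic continuation must be read off this Mellin--Barnes ratio rather than from an explicit formula. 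A complementary tool, useful at least for the base exponent $\be=1$, is the multiplicative factorization $Z_{\al_1\al_2}\elaw Z_{\al_1}\times Z_{\al_2}^{1/\al_1}$ (immediate on Mellin transforms): since $1/\al_1>1$ preserves $\HCM$, it shows that the set of $\al\le1/2$ for which $Z_\al$ is $\HCM$ is closed under multiplication, which together with the product (\ref{factoza}) for $\al=1/n$ and the intervals announced in \cite{Preprint:Fourati} enlarges the range of validity.

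The main obstacle is that the ``if'' direction is strictly harder than Bondesson's original conjecture. Indeed, for every $\al\le1/2$ one has $\be_0=\al/(1-\al)\le1$, so the value $\be=1$ already lies in the claimed $\HCM$ range and asserts precisely that $Z_\al$ is $\HCM$ for $\al\le1/2$; hence the refined ``if'' part contains the still-open ``if'' part of the first conjecture. Moreover, even in the exactly solvable cases $\al=1/n$ it goes beyond (\ref{factoza}): writing $Z_{1/n}^{1/(n-1)}$ via that product forces one to raise a product of gamma factors to the sub-unit power $-1/(n-1)$ for $n\ge3$, which the basic power rule does not license, so the threshold is not reachable from the known factorization and the elementary closure properties alone. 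For these reasons I expect a complete proof to require a genuinely new argument precisely at the exponent $\be_0$, and I would regard establishing the threshold case uniformly in $\al$ on a full sub-interval of $(0,1/2]$, rather than at sporadic values, as the decisive and hardest step.
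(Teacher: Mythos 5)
You were asked to prove a statement that the paper itself leaves open: it is stated as a conjecture, and the paper's entire ``proof'' content for it consists of the only-if part plus two threshold instances. Your proposal is correctly calibrated to exactly this state of affairs, and nothing you assert is wrong. Your only-if argument --- pass to an independent copy, use closure of $\HCM$ under division to conclude that $(Z_\al/\tilde Z_\al)^\be=T_\al^{\be/\al}$ is $\HCM$, then apply the only-if half of the main Theorem to force $\al\le1/2$ and $|\be/\al|\ge1/(1-\al)$ --- is precisely the argument the paper compresses into the sentence ``Observe that our result shows already the only if part.'' Likewise, your reduction of the if part to the single exponent $\be_0=\al/(1-\al)$ via the power rule is valid, and your two structural observations (that $\be=1$ lies in the claimed range since $\be_0\le1$, so the if part contains Bondesson's still-open conjecture; and that for $\al=1/n$ the threshold amounts to $(\ga_{1/n}\times\cdots\times\ga_{(n-1)/n})^{1/(n-1)}$ being $\HCM$, which is out of reach of (\ref{factoza}) plus the elementary closure rules) reproduce Remark (b) of the paper almost verbatim.

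The one substantive difference is in the partial results at the threshold. The paper does settle the boundary case at $\al=1/2$, via $Z_{1/2}\elaw 1/(4\ga_{1/2})$, and, nontrivially, at $\al=1/3$: it writes the density of $\sqrt{Z_{1/3}}$ in terms of the Bessel function $K_{1/3}$ and invokes the product formula $K_\al(x)K_\al(y)=2\cos(\pi\al)\int_0^\infty K_{2\al}(2\sqrt{xy}\sinh(t))e^{-(x+y)\cosh(t)}dt$, which exhibits $K_\al(uv)K_\al(u/v)$ as a mixture of functions $\CM$ in $w=v+1/v$ and yields more generally that $\sqrt{\ga_t\ga_s}$ is $\HCM$ whenever $|t-s|\le1/2$. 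This is exactly the kind of ad hoc new input at $\be_0$ that your final paragraph predicts is necessary. By contrast, your auxiliary tools --- the (correct) factorization $Z_{\al_1\al_2}\elaw Z_{\al_1}\times Z_{\al_2}^{1/\al_1}$ together with (\ref{factoza}) and \cite{Preprint:Fourati} --- only produce $\HCM$ at the base exponent $\be=1$, hence the range $|\be|\ge1$, which, as you yourself note, never reaches the threshold $\be_0<1$. In short: your only-if proof coincides with the paper's, your reduction is sound, and your assessment that the if part requires genuinely new ideas uniformly at the threshold is exactly the paper's own position; the paper merely goes one step further than you by disposing of $\al=1/2$ and $\al=1/3$ explicitly.
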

Observe that our result shows already the only if part. Some partial results for the if part are also given in \cite{Article:JedidiSimon} where it is shown that $Z_\al^\beta$ is SD when $\al\le1/2$ and $\beta\le -\al/(1-\al)$ -- see Proposition 1 in \cite{Article:JedidiSimon}, and the whole Section 3 therein where the critical power exponent $\al/(1-\al)$ appears naturally. In general, this conjecture on the power transformations of $Z_\al$ seems hard to solve, even when $\al$ is the reciprocal of an integer. In this paper we briefly handle the explicit case $\al = 1/2$ which is immediate, and the case $\al = 1/3$ which relies on a certain product formula for the modified Bessel function. This formula leads to another conjecture on the independent product of two gamma random variables. 

\section{Proof of the theorem}\label{sectionproof}

We first fix some notation and gather some known material on the CM and HCM properties. We denote by $\mathscr L\nu(x)=\int_0^\infty e^{-x\la}\nu(d\la)$ the Laplace transform of a $\sigma$-finite measure $\nu$ on $[0,\infty)$. If this measure $\nu$ is absolutely continuous with density $g,$ we write $\mathscr Lg(x)=\mathscr L \nu(x)=\int_0^\infty e^{-x\la}g(\la)d\la$. Recall that a function $f:(0,\infty)\to (0,\infty)$ is said completely monotone ($\CM$) if it is smooth and such that $(-1)^nf^{(n)}\ge 0$ for all $n\ge0.$  By Bernstein's theorem, a function $f$ is $\CM$ if and only if there exists a positive $\sigma$-finite measure $\nu$ on $[0,\infty)$ such that $f=\mathscr L\nu$ -- see e.g. chapter 1 in \cite{Livre:Schilling}. When $f(0^+)=1$, a CM function $f$ is hence the Laplace transform of some probability distribution with non-negative support.

\begin{prop} [\cite{Livre:Bondesson} p.69] \label{Prop:hcm}
Let $f:(0,\infty)\to[0,\infty)$ be an $\HCM$ function.
\begin{enumerate}
	\item $\forall a\in\R,~\forall |b|\le1, ~x\mapsto x^a f(x^b)$ is $\HCM$.\label{HCMMix}
	\item If $f(0^+)>0$ then $f$ is $\CM$.\label{HCMCM}
	\item $f$ has an analytic continuation on $\C\setminus\R_-$.\label{HCMHolo}
\end{enumerate}
\end{prop}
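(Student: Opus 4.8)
\emph{Overall plan.} I would prove (1) directly from the definition, and obtain (2) and (3) from the building-block representation \eqref{prodHCM} together with two standard facts: the pointwise product of $\CM$ functions is $\CM$, and a $\CM$ function composed with a Bernstein function is $\CM$. For (1), fix $a\in\R$ and $b$ with $|b|\le1$, and set $g(x)=x^af(x^b)$; since $v^b+v^{-b}$ depends only on $|b|$, I may assume $0\le b\le1$. For any $u>0$,
\[
g(uv)g(u/v)=u^{2a}\,f\lp u^bv^b\rp f\lp u^bv^{-b}\rp,
\]
the factor $v^{a}v^{-a}=1$ cancelling, so the prefactor $x^a$ contributes only the positive constant $u^{2a}$. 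Writing $\tilde u=u^b$ and $s=v^b$, the $\HCM$ property of $f$ gives $f(\tilde us)f(\tilde u/s)=\Phi_{\tilde u}(s+s^{-1})$ for some $\CM$ function $\Phi_{\tilde u}$, whence, with $w=v+v^{-1}$,
\[
g(uv)g(u/v)=u^{2a}\,\Phi_{u^b}\!\lp v^b+v^{-b}\rp=u^{2a}\,\Phi_{u^b}\!\lp\psi_b(w)\rp,\qquad \psi_b(w):=v^b+v^{-b}.
\]
The task thus reduces to showing that $\psi_b$ is a Bernstein function of $w$ on $[2,\infty)$: then $\Phi_{u^b}\circ\psi_b$ is $\CM$ in $w$, and the positive factor $u^{2a}$ is harmless. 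This Bernstein claim is the crux of (1), and it is precisely where $|b|\le1$ enters.

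To establish it I would substitute $v=e^\theta$ ($\theta\ge0$), so that $w=2\cosh\theta$ and $\psi_b(w)=2\cosh(b\theta)$, giving $\psi_b'(w)=b\,\sinh(b\theta)/\sinh\theta\ge0$. It suffices to show this derivative is $\CM$ in $w$, i.e. that $\psi_b$ is a complete Bernstein function, and I would verify the equivalent Pick-function property: the inverse Joukowski map $w\mapsto v(w)=\tfrac12\lp w+\sqrt{w^2-4}\rp$ sends $\C\setminus[-2,2]$ conformally onto $\{|v|>1\}$, and $v\mapsto v^b+v^{-b}$ then carries the upper half-plane into itself while keeping $\psi_b\ge2$ on $(2,\infty)$. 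The cases $b=1$ ($\psi_1(w)=w$) and $b=\tfrac12$ ($\psi_{1/2}(w)=\sqrt{w+2}$) are instructive checks, and the semigroup relation $\psi_b\circ\psi_{b'}=\psi_{bb'}$ is consistent with the closure of Bernstein functions under composition. For $b>1$ the claim fails (e.g. $\psi_2(w)=w^2-2$ has an increasing derivative), which matches the sharpness of $|b|\le1$.

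For (2), I would take $u$ as a parameter and put $x=uv$, so that the defining property reads $f(x)\,f(u^2/x)=\Phi_u\lp x/u+u/x\rp$ with $\Phi_u$ $\CM$; by Bernstein's theorem $\Phi_u=\mathscr L\rho_u$, whence
\[
f(x)\,f(u^2/x)=\int_0^\infty e^{-(s/u)x}\,e^{-(us)/x}\,\rho_u(ds)=\int_0^\infty e^{-tx}\,e^{-u^2t/x}\,\rho_u(u\,dt),
\]
after the substitution $t=s/u$. Letting $u\to0^+$ I would use $f(u^2/x)\to f(0^+)>0$ and $e^{-u^2t/x}\to1$, and extract a weak limit $\sigma$ of the measures $\rho_u(u\,dt)$ to obtain $f(0^+)f(x)=\int_0^\infty e^{-tx}\sigma(dt)$, exhibiting $f$ as $\CM$. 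The obstacle here is measure-theoretic: establishing tightness of $\{\rho_u(u\,dt)\}$ and justifying the interchange of limit and integral. (Alternatively one can argue from \eqref{prodHCM}: a block $cx^{a}\prod(1+c_ix)^{-b_i}$ with $f(0^+)>0$ forces $a\le0$, each factor is then $\CM$, and one passes to the limit — provided the approximants can be chosen with controlled exponents.)

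For (3), each block $cx^{a}\prod_i(1+c_ix)^{-b_i}$ already extends holomorphically to $\C\setminus\R_-$, since $x^a$ has an analytic branch there and every zero $-1/c_i$ of $1+c_ix$ lies on $\R_-$. Writing $f$ as a pointwise limit on $(0,\infty)$ of such blocks $f_n$, I would upgrade this to local uniform convergence on $\C\setminus\R_-$ by a normal-families (Vitali--Porter) argument, so that the limit is holomorphic. The main obstacle for (3), and the most delicate point of the whole proposition, is the local boundedness of $\{f_n\}$ on compact subsets of $\C\setminus\R_-$: on a compact set inside a sector $|\arg z|\le\pi-\delta$ one has the uniform bound $|1+c_iz|\ge\sin\delta$ for all $c_i>0$, but controlling the full products (equivalently the sums $\sum_i b_{n,i}$) requires feeding back the convergence and positivity of $f_n$ on $(0,\infty)$ to tame the approximants. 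This tightness/boundedness step is where the real work lies.
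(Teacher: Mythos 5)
You should first note that the paper contains no proof of this proposition: it is quoted verbatim from p.~69 of \cite{Livre:Bondesson} as known background, so your attempt stands or falls on its own. Your part (1) is correct: reducing to the claim that $w\mapsto v^b+v^{-b}$ (with $w=v+v^{-1}$) is a Bernstein function for $|b|\le1$ is exactly the right crux, your verification via $v=e^\theta$, $w=2\cosh\theta$, $\psi_b(w)=2\cosh(b\theta)$ and the Pick/Joukowski argument is sound, and the paper itself invokes this very fact in Remark (a) of Section~2, citing p.~183 of \cite{Article:Bondesson:HCMGGC}. Your part (2) is sound in approach, and the tightness you flag is fillable with bounds already implicit in your identity: for $16u^2\le 3x^2$ one has $e^{-tx}\le e^{-tx/4}e^{-4u^2t/x}$, whence $\int_0^\infty e^{-tx}\rho_u(u\,dt)\le f(x/4)\,f(4u^2/x)$, which stays bounded as $u\to0^+$; vague (Helly) compactness of the measures $e^{-tx_1}\rho_u(u\,dt)$ and testing against the $C_0$ functions $e^{-t(x-x_1)}$ — the error $\int e^{-t(x-x_1)}\bigl(1-e^{-u^2t/x}\bigr)\rho_u(u\,dt)$ vanishes by the same mass bound — then gives $f(0^+)f(x)=\int e^{-t(x-x_1)}\sigma(dt)$ on each $(x_1,\infty)$. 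One caveat: you tacitly assume $f(0^+)<\infty$, whereas the statement as written also covers $f(0^+)=+\infty$ (e.g.\ $f(x)=x^{-1}$), which your limit computation does not reach without a preliminary normalization.

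The genuine gap is in part (3), and it sits exactly where you located it, but it is worse than a technicality: local boundedness of the approximants is simply false as a general principle along your route. The correct sector estimate, $|1+cz|\ge\cos(\theta/2)\,(1+c|z|)$ for $\arg z=\theta$, only yields $|f_n(z)|\le\cos(\theta/2)^{-B_n}f_n(|z|)$ with $B_n=\sum_i b_{n,i}$, and $B_n$ is not controlled by pointwise convergence on $(0,\infty)$: the sequence $(1+x/n)^{-n}\to e^{-x}$ has $B_n=n\to\infty$, while the limit is HCM (indeed $e^{-uv}e^{-u/v}=e^{-uw}$). The continuation survives in that example only through cancellations that any modulus bound destroys, and pointwise convergence of zero-free holomorphic functions on a half-line does not imply normality; so "feeding back positivity on $(0,\infty)$" cannot launch Vitali--Porter. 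The standard repair (essentially Bondesson's, Chapter~5 of \cite{Livre:Bondesson}) works at the level of measures rather than functions: write $-\log f_n(x)=-\log c_n-a_n\log x+\int_{(0,\infty)}\log(1+x/t)\,\mu_n(dt)$ with $\mu_n=\sum_i b_{n,i}\delta_{1/c_{n,i}}$, use the convergence of $f_n$ at finitely many points to control suitably weighted masses of $\mu_n$, and pass to a subsequential limit to obtain a canonical representation of $\log f$ as $\beta\log x+\mathrm{const}-c_1x-c_2/x-\int\log(1+x/t)\,\mu(dt)$, with divergences compensated by the affine terms; this formula, of the same shape as the GGC exponent in \eqref{GGC}, visibly extends analytically and zero-free to $\C\setminus\R_-$. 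Note finally that the tempting shortcut "renormalize so that $f(0^+)\in(0,\infty)$, apply (2) and Proposition \ref{Prop:HCMCM}, and read the continuation off \eqref{GGC}" does not cover all HCM functions: $e^{-1/x}$ is HCM, yet no power renormalization $x^{-\beta}f(x)$ gives it a positive finite limit at $0^+$, so the measure-level compactness (or the canonical representation) is genuinely needed.
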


\begin{prop} [\cite{Livre:Bondesson} p.69] \label{Prop:hcm2}
A function $g:(0,\infty)\to(0,\infty)$ is $\HCM$ if and only if $x\mapsto g(x^a)$ is $\HCM$ for all $a \in(0,1).$
\end{prop}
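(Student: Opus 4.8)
The plan is to treat the two implications separately: the direct one is immediate, while the converse rests on a limiting argument whose only real content is that the $\HCM$ class is stable under pointwise limits. Throughout I will use the defining property that a function $h$ is $\HCM$ precisely when, for every $u>0$, the map $v\mapsto h(uv)h(u/v)$ is $\CM$ as a function of $w=v+v^{-1}$. For the direct implication, suppose $g$ is $\HCM$. Then the first property in Proposition~\ref{Prop:hcm}, applied to $f=g$ with the power exponent taken to be $0$, shows at once that $x\mapsto g(x^a)$ is $\HCM$ for every $a\in(0,1)$, since $|a|\le1$. This direction requires nothing new.

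For the converse, assume that $g(x^a)$ is $\HCM$ for every $a\in(0,1)$. I would first record that $g$ is smooth, hence continuous, on $(0,\infty)$: fixing any $a_0\in(0,1)$, the third property in Proposition~\ref{Prop:hcm} gives that $x\mapsto g(x^{a_0})$ admits an analytic continuation on $\C\setminus\R_-$, and composing with the smooth map $y\mapsto y^{1/a_0}$ shows that $g$ itself is smooth. The key step is then to let $a\uparrow1$: by continuity of $g$ one has $g(x^a)\to g(x)$ pointwise on $(0,\infty)$. It therefore suffices to prove that the $\HCM$ class is closed under pointwise limits, for then $g$, being the pointwise limit of the $\HCM$ functions $g(x^{a})$, is itself $\HCM$. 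This is coherent with the description of $\HCM$ as the pointwise closure of the functions \eqref{prodHCM} recalled in the introduction.

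The remaining point, which I expect to be the main obstacle, is precisely this pointwise-limit stability, and I would reduce it to the classical statement for $\CM$ functions by unwinding the definition. If $g_n\to g$ pointwise and each $g_n$ is $\HCM$, then for every fixed $u>0$ the products $v\mapsto g_n(uv)g_n(u/v)$ converge pointwise to $v\mapsto g(uv)g(u/v)$. Since the change of variable $v\mapsto w=v+v^{-1}$ is a bijection from $[1,\infty)$ onto $[2,\infty)$ (and the products are invariant under $v\mapsto1/v$, so restricting to $v\ge1$ is harmless), this is pointwise convergence in the variable $w$ of a sequence of $\CM$ functions. One then invokes the fact that a pointwise limit of $\CM$ functions is $\CM$: via Bernstein's theorem each term is $\mathscr L\nu_n$ for a positive measure $\nu_n$, and Helly's selection theorem produces a vaguely convergent subsequence, the mass possibly escaping to infinity being annihilated by the factor $e^{-w\la}$ for $w>0$, so that the limit is again a Laplace transform. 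Consequently $w\mapsto g(uv)g(u/v)$ is $\CM$ for every $u>0$, which is exactly the assertion that $g$ is $\HCM$, and the converse follows.
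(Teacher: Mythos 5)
Your proof is correct. Note that the paper itself offers no proof of this proposition --- it is quoted verbatim from Bondesson's monograph (p.~69) --- so there is no in-paper argument to compare against; but your route is precisely the standard one, and in substance Bondesson's own: the direct implication is the case $a=0$, $b\in(0,1)$ of Proposition~\ref{Prop:hcm}~(\ref{HCMMix}), and the converse follows by letting $a\uparrow1$ once one knows (i) that $g$ is continuous and (ii) that the $\HCM$ class is stable under pointwise limits with a finite positive limit. Your handling of both points is sound: continuity via Proposition~\ref{Prop:hcm}~(\ref{HCMHolo}) applied to $g(x^{a_0})$ and composition with $y\mapsto y^{1/a_0}$ (which in fact yields real-analyticity of $g$ on $(0,\infty)$), and the limit stability correctly reduced, through the symmetry under $v\mapsto 1/v$, to the classical fact that a finite pointwise limit of $\CM$ functions is $\CM$. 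Your Helly sketch of that classical fact is also essentially right --- the key point, which you identify, is that the bound $\int e^{-w_0\la}\nu_n(d\la)=H_n(w_0)$ at a fixed $w_0$ makes the reweighted measures $e^{-w_0\la}\nu_n(d\la)$ uniformly bounded, and $e^{-(w-w_0)\la}$ vanishes at infinity so vague convergence suffices for $w>w_0$ --- though you could simply have cited this as known, as the paper does elsewhere (``the pointwise limit of $\CM$ functions remains $\CM$, see chapter~1 in \cite{Livre:Schilling}''), since redoing it adds nothing. One cosmetic remark: the hypothesis for all $a\in(0,1)$ is stronger than needed; your argument, like Bondesson's, only uses a sequence $a_n\uparrow1$.
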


\begin{prop} [\cite{Livre:Bondesson} Theorem 5.4.1]\label{Prop:HCMCM}
A probability distribution $\mu$ is a $\GGC$ if and only if its Laplace transform $\mathscr L\mu$ is an $\HCM$ function.
\end{prop}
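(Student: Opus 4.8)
The plan is to prove both implications by exploiting that the two classes are generated, through products and pointwise limits, by the same elementary blocks $(1+\la/x)^{-b}$. Throughout write $\phi=\mathscr L\mu$, and recall that since $\mu$ is a probability distribution one has $\phi(0^+)=1$, so that by Proposition~\ref{Prop:hcm}(\ref{HCMCM}) an $\HCM$ $\phi$ is automatically $\CM$ and, by Proposition~\ref{Prop:hcm}(\ref{HCMHolo}), extends analytically to $\C\setminus\R_-$. The key observation is that a single block $(1+\la/x)^{-b}$ is at once of the form (\ref{prodHCM}) --- hence $\HCM$ --- and a Laplace transform whose exponent $b\log(1+\la/x)$ corresponds to the point Thorin measure $b\delta_x$ in (\ref{GGC}); moreover the drift factor $e^{-a\la}=\lim_{b\to\infty}(1+a\la/b)^{-b}$ is a pointwise limit of such blocks. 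Both directions will amount to organising these blocks on the two sides of the dictionary.

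For the direct implication, suppose $\mu$ is a $\GGC$ with drift $a$ and Thorin measure $\nu$. I would approximate $\nu$ by finitely supported measures $\nu_n$ and encode the drift by an extra atom far out, so that
\[
\phi_n(\la)=\exp\left(-\int_0^\infty\log\lp1+\frac\la x\rp\nu_n(dx)\right)=\prod_i\lp1+\frac{\la}{x_i^{(n)}}\rp^{-b_i^{(n)}}
\]
is a finite product of the form (\ref{prodHCM}), hence $\HCM$, and $\phi_n\to\phi$ pointwise on $(0,\infty)$. Since the $\HCM$ class is by definition closed under pointwise limits, $\phi$ is $\HCM$. The only routine point is the pointwise convergence of the exponents, which follows from the integrability conditions on $\nu$ together with monotone/dominated convergence.

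For the converse, the harder implication, suppose $\phi$ is $\HCM$. By the characterization (\ref{prodHCM}) write $\phi$ as a pointwise limit of $\phi_n(\la)=c_n\la^{a_n}\prod_i(1+c_i^{(n)}\la)^{-b_i^{(n)}}$. Since $0<\phi(0^+)=1<\infty$, the power exponents $a_n$ must tend to $0$ and, after normalising $c_n\to1$, one may discard the monomial factor in the limit. Setting $x_i^{(n)}=1/c_i^{(n)}$ and $\nu_n=\sum_i b_i^{(n)}\delta_{x_i^{(n)}}$, the functions $\psi_n:=-\log\phi_n$ then satisfy $\psi_n'(\la)=\int_0^\infty\frac{\nu_n(dx)}{x+\la}$, that is, $\psi_n'$ is the Stieltjes transform of a positive measure (equivalently, $\psi_n$ is the complete Bernstein, viz. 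Pick, function associated with $\phi_n$ on the cut plane of Proposition~\ref{Prop:hcm}(\ref{HCMHolo})). Because $\phi_n\to\phi$ on $(0,\infty)$, we get $\psi_n\to\psi:=-\log\phi$ and, by convergence of these monotone analytic functions on an interval, $\psi_n'\to\psi'$ pointwise. Invoking the compactness of the Stieltjes class --- a Helly selection on the $\nu_n$, with the mass escaping to $+\infty$ condensing into a constant $a\ge0$ via $\log(1+\la/x)\sim\la/x$ --- I would obtain $\psi'(\la)=a+\int_0^\infty\frac{\nu(dx)}{x+\la}$, and integrating from $0$ yields $-\log\phi(\la)=a\la+\int_0^\infty\log(1+\la/x)\nu(dx)$, which is exactly (\ref{GGC}).

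The main obstacle is precisely this limiting step. One must simultaneously control three phenomena in the approximating sequence: that the powers $a_n$ genuinely vanish (forced by the finite nonzero value $\phi(0^+)=1$); that the discrete Thorin measures $\nu_n$ neither lose nor spuriously create mass --- the part staying in a compact set converging to $\nu$ while the escaping mass condenses into precisely the drift $a$; and that the resulting $\nu$ satisfies $\int_0^1|\log x|\nu(dx)<\infty$ and $\int_1^\infty x^{-1}\nu(dx)<\infty$, which is what the finiteness of $\psi$ near $0$ and near $\infty$ delivers. The complex-analytic (Stieltjes/Pick) formulation is what makes this tractable: closure of the Stieltjes class under pointwise limits, together with the uniqueness of the associated Nevanlinna--Stieltjes representation, packages the delicate real-variable bookkeeping into a single compactness statement.
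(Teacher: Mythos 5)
The paper itself offers no proof of this proposition --- it is quoted verbatim from Bondesson's monograph (Theorem 5.4.1) --- so your attempt must be judged on its own merits. Your forward direction ($\GGC$ $\Rightarrow$ $\HCM$ Laplace transform) is correct and standard: discretize the Thorin measure, treat the drift as a limit of blocks, and use closure of $\HCM$ under pointwise limits. The converse, however, has a genuine gap at its very first step: it is \emph{not} true that pointwise convergence $\phi_n\to\phi$ on $(0,\infty)$ together with $\phi(0^+)=1$ forces the exponents $a_n$ to vanish. Take $\phi_n(\la)=n\la\,(1+n\la)^{-1}(1+\la)^{-1}$: each $\phi_n$ is of the form (\ref{prodHCM}) with $a_n=1$ for every $n$, yet $\phi_n\to(1+\la)^{-1}$ pointwise on $(0,\infty)$, the Laplace transform of a unit exponential. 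A monomial $\la^{a}$ can always be compensated asymptotically by factors $(1+c\la)^{-b}$ with $c\to\infty$, so nothing in your hypotheses lets you ``discard the monomial factor''. This is not cosmetic: for general approximants one has $\psi_n'(\la)=-a_n/\la+\sum_i b_i^{(n)}/(x_i^{(n)}+\la)$, a \emph{signed} object (in the example, $\psi_n'(\la)=\frac{1}{1+\la}-\frac{1}{\la(1+n\la)}<0$ near $0$), so $\psi_n'$ is not the Stieltjes transform of a positive measure, and the entire compactness mechanism you invoke --- Helly selection on positive $\nu_n$, escaping mass condensing into a drift --- is simply unavailable. For signed sequences one must in addition rule out cancellation between the negative atom at the origin and mass of $\nu_n$ accumulating there (exactly what happens in the example), which is precisely the bookkeeping you hoped the ``compactness of the Stieltjes class'' would package away.

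A second unjustified step, which compounds the first, is the passage $\psi_n\to\psi$ $\Rightarrow$ $\psi_n'\to\psi'$: pointwise convergence on an interval does not give convergence of derivatives in general; for honest Pick or Stieltjes functions this is rescued by normal-family (Montel/Vitali) arguments, but your $\psi_n'$ fail to be Pick precisely because of the $-a_n/\la$ term. The robust repair --- and the route Bondesson actually takes --- goes through the Pick characterization, i.e.\ Proposition \ref{Prop:Pick} of this paper: one verifies $\im(\varphi'(z)/\varphi(z))\ge0$ on the upper half-plane for the \emph{limit} function, noting that for a product (\ref{prodHCM}) with $a\le0$ each factor contributes $\im\lp-\frac{b_ic_i}{1+c_iz}\rp=\frac{b_ic_i^2\im(z)}{|1+c_iz|^2}\ge0$, upgrading pointwise convergence on $(0,\infty)$ to locally uniform convergence on $\C\setminus\R_-$ via Montel/Vitali, and obtaining nonvanishing of $\varphi$ by Hurwitz together with $\varphi>0$ on $(0,\infty)$. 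Even on this route the possibility $a_n>0$ must be excluded by a structural argument about $\HCM$ functions with $0<f(0^+)<\infty$; that step is where the real content of the theorem sits, and it is handled in \cite{Livre:Bondesson} (Sections 3.1 and 5.4). Your ``dictionary of blocks'' skeleton is the right intuition --- indeed the Pick route you would be forced into is exactly the one the present paper exploits for its main theorem --- but as written the two limiting claims are respectively false and unproved.
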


\begin{prop}[\cite{Livre:Bondesson} Theorem 3.1.3] \label{Prop:Pick}
Let $\mu$ be a probability distribution and let $\varphi=\mathscr L\mu.$ Then $\mu$ is a $\GGC$ if and only if $\varphi$ has an analytic continuation to $\C\setminus\R_-$ such that $\varphi$ does not vanish on $\C\setminus\R_-$ and
\[
\im(z)>0\;\Rightarrow\; \im(\varphi'(z)/\varphi(z))\ge0.
\]
\end{prop}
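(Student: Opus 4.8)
The plan is to treat the two implications separately, the forward one by a direct computation and the converse one through the Nevanlinna (Herglotz--Pick) integral representation of $\varphi'/\varphi$.

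For the ``only if'' part, suppose $\mu$ is a $\GGC$, so that by \eqref{GGC} one has $\varphi(\la)=\exp(-a\la-\int_0^\infty\log(1+\la/x)\,\nu(dx))$. First I would check that the exponent extends holomorphically to $\C\setminus\R_-$: for $z\notin\R_-$ and $x>0$ the principal branch of $\log(1+z/x)$ is analytic, and the integrability conditions on $\nu$ give locally uniform convergence of the integral, so $\varphi$ admits a non-vanishing analytic continuation there. Differentiating under the integral sign yields
\[
\frac{\varphi'(z)}{\varphi(z)}=-a-\int_0^\infty\frac{\nu(dx)}{x+z}.
\]
Since $\im\big((x+z)^{-1}\big)=-\im(z)/|x+z|^2$ for $x>0$, taking imaginary parts gives $\im(\varphi'(z)/\varphi(z))=\im(z)\int_0^\infty|x+z|^{-2}\nu(dx)\ge0$ whenever $\im(z)>0$, which is the announced property.

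For the ``if'' part, I would set $h=-\log\varphi$ and collect the structural properties forced by $\varphi=\mathscr L\mu$: on $(0,\infty)$ one has $0<\varphi\le1$, hence $h\ge0$ and $h(0^+)=0$; the Laplace transform is log-convex, so $h$ is concave and $h'$ is non-increasing and non-negative, with a finite limit at $+\infty$; and by hypothesis $h$, hence $g:=-h'=\varphi'/\varphi$, continues analytically to $\C\setminus\R_-$, is real on $(0,\infty)$, and satisfies $\im(g)\ge0$ on the upper half-plane. Thus $g$ is a Pick function, and by the Nevanlinna representation it can be written as
\[
g(z)=\al+\be z+\int_{\R}\Big(\frac1{t-z}-\frac t{1+t^2}\Big)d\sigma(t)
\]
with $\be\ge0$ and $\sigma\ge0$ satisfying $\int(1+t^2)^{-1}d\sigma<\infty$; since $g$ is real and analytic on $(0,\infty)$, Stieltjes inversion shows that $\sigma$ is carried by $\R_-$. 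Boundedness of $g$ as $\la\to+\infty$ forces $\be=0$, and a positive mass of $\sigma$ at $0$ would make $g(\la)\sim-\sigma(\{0\})/\la$ near the origin, forcing $\int_{0^+}g(\la)\,d\la$ to diverge and contradicting $h(0^+)=0$; so $\sigma(\{0\})=0$.

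It then remains to convert this into the $\GGC$ form and to recover the integrability conditions on the Thorin measure, which is where I expect the main difficulty. Writing $\nu$ for the image of $\sigma$ under $t\mapsto-t$ and substituting $t=-x$ gives $g(\la)=\al+\int_0^\infty\big(-(x+\la)^{-1}+x(1+x^2)^{-1}\big)\nu(dx)$. The finiteness of $\lim_{\la\to\infty}g(\la)=-h'(+\infty)$ forces $\int_0^\infty x(1+x^2)^{-1}\nu(dx)<\infty$, hence $\int_1^\infty x^{-1}\nu(dx)<\infty$, and one may split the integral to obtain $h'(\la)=a+\int_0^\infty(x+\la)^{-1}\nu(dx)$ with $a=h'(+\infty)\ge0$. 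Integrating from $0$ to $\la$ and using $h(0^+)=0$ together with monotone convergence yields $h(\la)=a\la+\int_0^\infty\log(1+\la/x)\,\nu(dx)$; the finiteness of $h(\la)$, now that the tail is controlled, forces $\int_0^1|\log x|\,\nu(dx)<\infty$. This is exactly \eqref{GGC}, so $\mu$ is a $\GGC$. The delicate points, and the crux of the argument, are the localisation of the representing measure on $\R_-$ and the passage from the ``corrected'' Nevanlinna integral to the genuine Thorin representation, i.e.\ the extraction of the two integrability conditions from the probabilistic constraints carried by $\varphi$.
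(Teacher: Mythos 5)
The paper offers no proof of this proposition: it is stated as a known result, quoted from Theorem 3.1.3 of \cite{Livre:Bondesson}, so there is no internal argument to compare yours against. Your proof is correct, and it is essentially the classical (Bondesson-style) argument for this characterization: the direct implication by analytic continuation of the exponent in (\ref{GGC}) and differentiation under the integral sign, and the converse by recognising $\varphi'/\varphi$ as a Pick function, invoking the Nevanlinna representation, localising the representing measure on $\R_-$ via Stieltjes inversion (using that $\varphi'/\varphi$ is real-analytic across $(0,\infty)$), killing the linear term and the atom at $0$ through the behaviour of $\mathscr L\mu$ at $+\infty$ and the constraint $-\log\varphi(0^+)=0$, and finally extracting the two Thorin integrability conditions from the finiteness of $-\log\varphi$ at $+\infty$ and on $(0,\infty)$ respectively. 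The points you flag as delicate are exactly the right ones and your treatment of them is sound; the few details left implicit (monotone convergence for the combined integrand, and $\nu((0,1])<\infty$ from $\int(1+t^2)^{-1}\,d\sigma<\infty$, which justifies the dominated-convergence step showing $\int_0^\infty(x+\la)^{-1}\nu(dx)\to0$) are routine.
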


The last proposition is known as the Pick function characterization of GGC distributions, and we refer to chapter 6 in \cite{Livre:Schilling} for more on this topic. 
We last recall for completeness the following well-known formula  -- see e.g. chapter II, Theorem 7.4 in \cite{Livre:Widder}:

\begin{prop}[Laplace inversion formula]\label{Prop:InvLaplace} Let $f:\{z\in\C/\re(z)>0\}\to\C$ be an analytic function such that:
\begin{enumerate}
		\item $f$ is real on $(0,\infty)$.
		\item $\forall c>0,~t\mapsto f(c+it)$ is integrable on $\R$.
\end{enumerate}
Then there exists an integrable function $g:[0,\infty)\to\R$ such that $f=\mathscr Lg$. Moreover $\forall c>0$, $$g(\la)=\mathscr L^{-1} f(\la)=\frac1{2\pi i}\int_{\re(z)=c} e^{\la z}f(z)dz.$$
\end{prop}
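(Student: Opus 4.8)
The plan is to prove the formula directly, reading the Bromwich integral as a Fourier inversion. Fix $c>0$ and take as candidate
$$g(\la)=\frac1{2\pi i}\int_{\re(z)=c}e^{\la z}f(z)\,dz=\frac{e^{c\la}}{2\pi}\int_{-\infty}^{\infty}e^{i\la t}f(c+it)\,dt,$$
which is well defined for every real $\la$ since hypothesis (2) gives $t\mapsto f(c+it)\in L^1(\R)$; by Riemann--Lebesgue $e^{-c\la}g(\la)$ is then a bounded continuous function of $\la$. The first task is to see that $g$ does not depend on $c$. For $0<c_1<c_2$ I would apply Cauchy's theorem to the analytic function $e^{\la z}f(z)$ on the rectangle with vertical sides $\{\re z=c_1\}$ and $\{\re z=c_2\}$ truncated at heights $\pm R$; independence of $c$ then follows once the two horizontal segments are shown to vanish along a suitable sequence $R_n\to\infty$.

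Controlling those horizontal segments is the first genuine obstacle, because $L^1$-integrability on each vertical line does not by itself force pointwise decay. Here I would exploit that the mean $M_1(c)=\int_\R|f(c+it)|\,dt$ is a convex function of $c$ — being the vertical-line integral of the subharmonic function $|f|$, a Hadamard three-lines-type phenomenon — hence finite and locally bounded on $(0,\infty)$. Consequently $\int_{c_1}^{c_2}M_1(c)\,dc<\infty$, and Fubini makes the double integral $\int_{c_1}^{c_2}\!\int_\R|f(x+it)|\,dt\,dx$ finite; this forces $\int_{c_1}^{c_2}|f(x\pm iR)|\,dx\to0$ along some $R_n\to\infty$, and since $|e^{\la(x+it)}|=e^{\la x}$ is bounded on $[c_1,c_2]$ the horizontal contributions vanish, giving the $c$-independence. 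That $g$ is real-valued then follows from the Schwarz reflection identity $f(\bar z)=\overline{f(z)}$ (valid since $f$ is analytic and real on $(0,\infty)$): conjugating the integral above and substituting $t\mapsto-t$ yields $\overline{g(\la)}=g(\la)$.

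With $g$ in hand I would verify $\mathscr L g=f$. For $\re(z)=c'>c>0$, Fubini (again justified by the finiteness above) together with $\int_0^\infty e^{\la(w-z)}\,d\la=(z-w)^{-1}$ when $\re(w-z)<0$ gives
$$\int_0^\infty e^{-z\la}g(\la)\,d\la=\frac1{2\pi i}\int_{\re(w)=c}\frac{f(w)}{z-w}\,dw,$$
and the Cauchy-type integral on the right is evaluated by closing the vertical line to the right with a large semicircle: the only enclosed singularity is the simple pole at $w=z$, whose residue returns exactly $f(z)$, provided the semicircular arc contributes nothing in the limit.

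The remaining difficulties are concentrated in the limiting arguments at infinity — the vanishing of that semicircular arc, the support statement $g\equiv0$ on $(-\infty,0)$ (a Paley--Wiener fact obtained by pushing the line $\re(z)=c$ to $+\infty$), and the asserted integrability of $g$ on $[0,\infty)$. Each needs quantitative control of $f$ at infinity that the bare hypotheses deliver only through the convexity of the means $M_p(c)$ combined with the Riemann--Lebesgue boundedness of $e^{-c\la}g(\la)$. This is exactly the step I expect to be hardest, and where I would either carry out the three-lines estimates with care or simply invoke Widder's Theorem 7.4; everything else reduces to Cauchy's theorem, Fubini, and Fourier inversion.
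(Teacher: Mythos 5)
The first thing to note is that the paper itself offers no proof of this proposition: it is recalled ``for completeness'' with a pointer to chapter II, Theorem 7.4 of \cite{Livre:Widder}, so your closing fallback (``simply invoke Widder's Theorem 7.4'') is in fact the paper's entire argument. The trouble with your from-scratch plan is more serious than the technical awkwardness you flag: the proposition, with only hypotheses (1) and (2) as transcribed, is \emph{false}, so the three limiting steps you correctly identify as hard (vanishing of the right semicircle, the support claim $g\equiv 0$ on $(-\infty,0)$, integrability of $g$) cannot be closed by any amount of care. Take $f(z)=e^{z^2}$: it is entire, real on $(0,\infty)$, and $|f(c+it)|=e^{c^2-t^2}$ is integrable on every vertical line, so (1) and (2) hold; yet $f$ is unbounded on $(0,\infty)$ while any $\mathscr Lg$ with $g\in L^1[0,\infty)$ satisfies $|\mathscr Lg(z)|\le\|g\|_{L^1}$ on $\re(z)>0$, so no such representation exists. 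Your own construction illustrates exactly where things break: for this $f$ the Bromwich integral is indeed $c$-independent (completing the square gives $g(\la)=\frac{1}{2\sqrt\pi}\,e^{-\la^2/4}$), but this $g$ is strictly positive on all of $\R$ -- it is the kernel of a \emph{two-sided} Laplace representation -- so the Paley--Wiener push of the line to $+\infty$ and the residue evaluation of $\frac{1}{2\pi i}\int_{\re(w)=c}\frac{f(w)}{z-w}\,dw$ by a right semicircle both genuinely fail. Note also that here $M_1(c)=\sqrt\pi\,e^{c^2}$ is even log-convex, so your convexity lemma, true or not, is powerless against this failure mode.

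Two further points. First, the deduction ``$M_1(c)$ is convex because $|f|$ is subharmonic'' is itself unjustified at the stated level of generality: the formal computation $m''(c)=\int(\Delta|f|-\partial_t^2|f|)\,dt\ge 0$ needs decay of $\partial_t|f|$ as $t\to\pm\infty$, and three-lines-type results in a strip require an a priori Phragm\'en--Lindel\"of growth hypothesis; finiteness of $M_1$ on every line gives measurability and lower semicontinuity (Fatou) but not local boundedness. Second, what rescues both the proposition and the paper is the hypothesis your transcription (and the paper's informal statement) suppresses: Widder's theorem assumes in addition a uniform smallness condition at infinity, of the type $f(z)\to0$ uniformly as $|z|\to\infty$ in each half-plane $\re(z)\ge c>0$. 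In the paper's application this is available for free, since $|f_{\al,1-\al-\e}(z)|\lesssim|z|^{-2(1-\al-\e)}$ uniformly on $\C\setminus\R_-$; with that bound your outline becomes sound and even simplifies -- the horizontal segments, the right semicircle, and the $c\to+\infty$ limit are all controlled directly by the uniform decay, with no need for convexity of the means. So the correct repair is not sharper three-lines estimates but restoring the missing hypothesis and checking it for the function at hand, which is what the proof of Proposition \ref{Prop:CM} implicitly does.
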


Let us consider now the function
\begin{equation}\label{falga}
f_{\al,t}(x)\; =\; \frac1{x^{2t}+2\cos(\pi\al)x^t+1}
\end{equation}
with $\al\in(0,1)$ and $t\ge0,$  and set $f_\al=f_{\al,1-\al}$. We see from a change of variable in the formula (\ref{DensiteTal}) and Proposition \ref{Prop:hcm}~(\ref{HCMMix}) that our theorem amounts to show that $f_{\al,t}$ is $\HCM$ if and only if $\al\le1/2$ and $t\le1-\al$.

\subsection{Proof of the only if part} Proposition \ref{Prop:hcm}~(\ref{HCMCM}) shows the necessity of the condition $\al\le1/2$ because otherwise the function $f_{\al, t}$ would be locally increasing at $0^+$ (this simple remark is useful in the study of the GGC property, and will be further discussed in Section \ref{Section:GGCZaZa}). To show the necessity of $t\le1-\al$ it suffices to invoke Proposition \ref{Prop:hcm}~(\ref{HCMHolo}). More precisely, let 
\begin{equation}\label{Pal}
P_\al(z)=z^2+2\cos(\pi\al)z+1.
\end{equation}
This polynomial has two zeroes $e^{\pm i(1-\al)\pi}$, so that the function $P_\al(z^t)$ vanishes on $\C\setminus\R_-$ if and only if $t> 1-\al$, the two zeroes being then $e^{\pm i(1-\al)\pi/t}$. Hence, $f_{\al,t}$ has an analytic continuation on $\C\setminus\R_-$ only if $t\le 1-\al$.

\subsection{Proof of the if part} By Proposition \ref{Prop:hcm}~ (\ref{HCMMix}), it is enough to prove that $f_{\al}$ is $\HCM$. Observe first that 
$$f_{\al}(x)\; \to\; \frac1{(x+1)^2}$$ 
as $\al\to 0,$ and that the limit is clearly an HCM density. Let us now look at two particular cases.
\subsubsection{The case $\al=1/2$} We have 
\[
f_{1/2}(x)\; =\; \frac1{x+1}
\]
which is the prototype of an $\HCM$ function. Another way to handle this case is to use the identity 
$$Z_{1/2}\elaw\frac1{4\gamma_{1/2}},$$ 
which entails that $T_{1/2}^2 \elaw \gamma_{1/2}/\gamma_{1/2}$ is $\HCM$. 

\subsubsection{The case $\al=1/3$} One has
\[
f_{1/3}(x) = \frac1{x^{4/3}+x^{2/3}+1}\cdot
\]
However, we do not know how to prove the $\HCM$ property of this function neither directly, nor by showing that $f_{1/3}$ is the pointwise limit of functions of the type (\ref{prodHCM}). On the other hand, it is clear from the above considerations that this function is HCM as soon as $\sqrt{Z_{1/3}}$ has an HCM density. Using a change of variable and formula (2.8.31) in \cite{Livre:Zolotarev}, this latter density is given by 
\[\frac2{3\pi x^{2}}\, K_{1/3}\lp\frac2{3\sqrt{3}x}\rp,\] 
where \[
K_\al(x)\;=\;\int_0^\infty \cosh(\al y)e^{-x\cosh(y)}dy, \qquad \al\in\R,
\]
is a modified Bessel function. On the other hand, the product formula (79) p. 98 in \cite{Livre:Erdelyi2} tells that for all $\al\in (-1/2,1/2),~x,y>0,$ one has
\[ K_\al(x) K_\al(y) = 2\cos(\pi\al)\int_0^\infty K_{2\al}\left(2\sqrt{xy}\operatorname{sinh}(t)\right)e^{-(x+y)\operatorname{cosh}(t)}dt.\]
Hence, for all $u,v>0,$
\[ K_\al(uv) K_\al(u/v) = 2\cos(\pi\al)\int_0^\infty\underbrace{ K_{2\al}\left(2u\operatorname{sinh}(t)\right)e^{-uw\operatorname{cosh}(t)}}_{\text{$\CM$ in $w=v +1/v$}}dt\]
and since the $\CM$ class is closed under mixing -- see chapter 1 in \cite{Livre:Schilling} -- all in all this shows that $K_\al(uv) K_\al(u/v)$ is $\CM$ in the variable $v+1/v$, which by Proposition \ref{Prop:hcm}~(\ref{HCMMix}) entails the required $\HCM$ property for $\sqrt{Z_{1/3}}.$ 

\begin{rem} {\em (a) Recall that the factorization (\ref{factoza}) reads
$$\frac1{\sqrt{Z_{1/3}}}\;\elaw \; 3\sqrt{3\ga_{1/3}\ga_{2/3}}\cdot$$
More generally, the independent product $\sqrt{\ga_t\ga_s}$ has density
\[\frac{4x^{t+s-1}}{\Ga(t)\Ga(s)}K_{t-s}(2x)\]
for all $s,t>0.$ Hence, the above product formula for the modified Bessel function shows that $\sqrt{\ga_t\ga_s}$ is $\HCM$ (hence ID) whenever $|t-s|\le1/2$. In particular, the square root of the independent product of two unit exponential random variable is infinitely divisible, a fact which seems unnoticed in the literature. It is hence  natural to raise the 

\begin{conjecture} For all $s,t>0,$ the independent product $\sqrt{\ga_t\ga_s}$ is {\em HCM}.
\end{conjecture}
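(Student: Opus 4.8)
The plan is to reduce the conjecture to a single clean statement about the Macdonald function and then to attack that statement. Writing $\sigma=t+s$ and $\nu=t-s$, the density of $\sqrt{\ga_t\ga_s}$ is, up to a positive constant, $x^{\sigma-1}K_\nu(2x)$. Since $x^{\sigma-1}$ is HCM, and the HCM class is stable under the dilation $x\mapsto cx$ and under products, forming $f(uv)f(u/v)$ makes the power factor contribute only the constant $u^{2\sigma-2}$; the variable $v$ (hence $w=v+v^{-1}$) therefore enters solely through $K_\nu(2uv)K_\nu(2u/v)$. Consequently $\sqrt{\ga_t\ga_s}$ is HCM if and only if $x\mapsto K_\nu(x)$ is HCM, and since $K_\nu=K_{-\nu}$ the conjecture is \emph{equivalent} to the assertion that $K_\nu$ is HCM for every $\nu\ge0$. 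This equivalence is the first thing I would record, since it strips away the two parameters and the normalisation.

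In this formulation the range $|\nu|\le 1/2$ is exactly what the product formula (79) p.~98 in \cite{Livre:Erdelyi2} delivers. Taking $x=uv$ and $y=u/v$ there gives
\[
K_\nu(uv)K_\nu(u/v)=2\cos(\pi\nu)\int_0^\infty K_{2\nu}\lp 2u\sh(r)\rp e^{-uw\ch(r)}\,dr,
\]
whose integrand is CM in $w$; since the formula is valid for $|\nu|<1/2$, where the prefactor $2\cos(\pi\nu)$ is positive, the right-hand side is CM in $w$, and $|\nu|=1/2$ is covered by the explicit $K_{1/2}$. The task is thus to produce, for every $\nu>1/2$, a representation of $K_\nu(uv)K_\nu(u/v)$ as a genuinely positive mixture of functions that are CM in the single variable $w$.

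The route I would try is to return to the source of that product formula, namely
\[
K_\nu(uv)K_\nu(u/v)=\int_0^\infty\!\!\int_0^\infty e^{-u(v\ch\theta+\ch\phi/v)}\ch(\nu\theta)\ch(\nu\phi)\,d\theta\,d\phi,
\]
and to change variables to $a=\tfrac{\theta+\phi}2$, $b=\tfrac{\theta-\phi}2$. A short computation turns the exponent into $-u\lp w\ch a\ch b+(v-v^{-1})\sh a\sh b\rp$, so the \emph{diagonal} part $b=0$ produces precisely the desired kernel $e^{-uw\ch a}$, which is CM in $w$, while the off-diagonal part carries the factor $v-v^{-1}=\pm\sqrt{w^2-4}$. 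Integrating out $b$ and resumming the weight $\ch(\nu\theta)\ch(\nu\phi)$ is what yields the closed form above together with its sign $\cos(\pi\nu)$; the plan is to perform this resummation while tracking the sign and to show that for $\nu>1/2$ the negative contributions are dominated, so that after an integration by parts (or a contour deformation in the $b$-variable) one is left with a positive mixing measure against the $w$-CM kernel $e^{-uw\ch a}$.

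The hard part is exactly this control of the sign. The clean closed form is available only for $|\nu|<1/2$, and for larger $\nu$ the factor $\cos(\pi\nu)$ becomes negative, so no positivity is visible at the level of the product formula; equivalently, one must show that the Thorin-type measure attached to $x^{\sigma-1}K_\nu(2x)$ stays positive for all $\nu$, for which I see no monotonicity or convexity shortcut. I would also warn against the tempting multiplicative reduction that peels off an independent beta-prime factor to lower $\nu$ into $[0,1/2]$: the factor one is forced to extract is the \emph{square root} of a beta-prime variable, whose density is proportional to $(1+x^2)^{-b}$, and a direct computation of $f(uv)f(u/v)$ shows that $(1+x^2)^{-b}$ is not CM in $w$. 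Thus HCM cannot be propagated by such a ladder, which is the structural reason the problem resists the elementary closure properties of the class and, I expect, the main obstacle to a full proof.
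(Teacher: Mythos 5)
This statement is posed in the paper as an open \emph{conjecture}: the paper itself establishes only the partial result that $\sqrt{\ga_t\ga_s}$ is HCM when $|t-s|\le 1/2$, via the product formula (79) p.~98 of \cite{Livre:Erdelyi2}, and leaves the general case unresolved. Your proposal does not close it either, and you say so yourself, so there is a genuine gap: the entire range $|t-s|>1/2$. The parts of your write-up that are actual mathematics are, however, correct. The reduction to the single statement ``$K_\nu$ is HCM for every $\nu\ge0$'' is valid: in $f(uv)f(u/v)$ the factor $x^{t+s-1}$ contributes only the constant $u^{2(t+s-1)}$, so the HCM property of the density is equivalent to the complete monotonicity of $K_\nu(uv)K_\nu(u/v)$ in $w=v+v^{-1}$ for every $u>0$, independently of $t+s$; this is a clean reformulation worth recording. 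Your treatment of $|\nu|<1/2$ is then \emph{exactly} the paper's argument (part (a) of the Remark following the case $\al=1/3$), down to the same substitution $x=uv$, $y=u/v$ and the same appeal to closure of the CM class under mixing. Your warning about the multiplicative ladder is also sound: the square root of a beta-prime factor has a density carrying a factor $(1+x^2)^{-b}$, which blows up at $\pm i\in\C\setminus\R_-$ and therefore has no analytic continuation to $\C\setminus\R_-$, hence is not HCM; so the HCM property cannot be pushed down to the window $|\nu|\le1/2$ by peeling off independent factors.

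The gap itself is the proposed ``resummation with sign control'' for $\nu>1/2$. The change of variables $a=(\theta+\phi)/2$, $b=(\theta-\phi)/2$ does turn the exponent into $-u\lp w\ch a\ch b+(v-v^{-1})\sh a\sh b\rp$, but the off-diagonal contribution then depends on $v$ through $v-v^{-1}=\pm\sqrt{w^2-4}$ rather than through $w$ alone, and integrating out $b$ simply reproduces the closed form whose prefactor $\cos(\pi\nu)$ changes sign at $\nu=1/2$ -- which is precisely why the known argument stops there. Nothing in your outline explains how the negative contributions are to be dominated; ``an integration by parts or a contour deformation in the $b$-variable'' is a direction, not an argument, and no candidate positive mixing measure is exhibited. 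As it stands, your proposal reproves the partial result already contained in the paper and leaves the conjecture exactly where the paper leaves it.
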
\noindent
Recall that $\sqrt{\ga_t}$ is not ID because of the superexponential tails of its distribution function - see Theorem 26.1 in \cite{Livre:Sato}, and hence not HCM.\\

(b) The above considerations show that our conjecture stated at the end of the introduction is true at least for $\al = 1/2$ and $\al = 1/3.$ By (\ref{factoza}), its validity for $\al = 1/n$ with $n\ge 4$ amounts to the fact that 
$$(\gamma_{1/n}\times\cdots\times \gamma_{(n-1)/n})^{1/(n-1)}$$ 
has an HCM density. More generally, we believe that the latter should be true for all independent products of the type $(\gamma_{t_1}\times\cdots\times \gamma_{t_n})^{1/n}$ with $t_1, \ldots, t_n > 0.$ The computations connected to this latter problem seem however quite challenging.\\

(c) The above remark (a) shows that the independent product of two half-Cauchy random variables
$$T_{1/2}\,\times\, T_{1/2}\; \elaw\; \frac{\sqrt{\gamma_{1/2}\gamma_{1/2}}}{\sqrt{\gamma_{1/2}\gamma_{1/2}}}$$
is HCM. This fact is less trivial than the HCM property for $T_{1/2}^2.$}
\end{rem}

\bigskip

\subsubsection{}

Let us now outline the proof of the if part. We will first show that the function $f_{\al}$ is CM. Since $f_{\al}(0)=1$, the function $f_{\al}$ is then the Laplace transform of some probability distribution $\mu_{\al}$. We then show that the latter is a $\GGC$ by applying the Pick criterion of Proposition \ref{Prop:Pick} on $f_{\al}.$ This will show that $f_{\al}$ is HCM by Proposition \ref{Prop:HCMCM}.

\begin{prop}\label{Prop:CM}The function $f_{\al}$ is $\CM$ for all $\al\le1/2.$
\end{prop}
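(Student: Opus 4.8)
The plan is to obtain complete monotonicity through Bernstein's theorem: it suffices to exhibit a nonnegative function $g$ on $(0,\infty)$ with $f_\al=\mathscr L g$. First I would justify applying the Laplace inversion formula of Proposition~\ref{Prop:InvLaplace} to $f_\al$ (recall $t=1-\al$). The only-if part already shows that $f_\al$ extends analytically to $\C\setminus\R_-$; moreover $f_\al$ is real on $(0,\infty)$, and since $f_\al(z)\sim z^{-2(1-\al)}$ as $|z|\to\infty$, the map $t\mapsto f_\al(c+it)$ is integrable on $\R$ for every $c>0$ as soon as $2(1-\al)>1$, i.e. $\al<1/2$ (the boundary case $\al=1/2$ being the elementary $f_{1/2}(x)=1/(x+1)$ treated above). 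Proposition~\ref{Prop:InvLaplace} then produces an integrable $g$ with $f_\al=\mathscr L g$ and $g(\la)=\frac1{2\pi i}\int_{\re z=c}e^{\la z}f_\al(z)\,dz$.

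Next I would make $g$ explicit by pushing the Bromwich line onto the branch cut $\R_-$. Since $f_\al$ is analytic on $\C\setminus\R_-$, the contour collapses to an integral of the jump of $f_\al$ across the cut against $e^{-\la r}$. Writing $z^{1-\al}=r^{1-\al}e^{\pm i(1-\al)\pi}$ on the two edges and using $f_\al(re^{-i\pi})=\overline{f_\al(re^{i\pi})}$, the jump is governed by $-\tfrac1\pi\im f_\al(re^{i\pi})$, and a direct computation gives
\[
\im f_\al(re^{i\pi})=-\,\frac{\sin(2\pi\al)\,r^{1-\al}\bigl(1-r^{1-\al}\bigr)}{\bigl|P_\al\bigl(r^{1-\al}e^{i(1-\al)\pi}\bigr)\bigr|^2}.
\]
A subtlety must be faced here: because $t=1-\al$ exactly, the two zeros $e^{\pm i(1-\al)\pi}$ of $P_\al(z^{1-\al})$ both collapse onto the cut at the single point $z=-1$, where $f_\al$ has a simple pole. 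Consequently the jump above has a non-integrable singularity at $r=1$, so the collapsed contour yields the genuine (integrable) $g$ of Proposition~\ref{Prop:InvLaplace} only as a principal-value integral together with a residue term proportional to $e^{-\la}$ coming from the indentation around $z=-1$.

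The main obstacle is then to prove that this $g$ is nonnegative for every $\la>0$. This is genuinely delicate: the boundary density $-\tfrac1\pi\im f_\al(re^{i\pi})$ changes sign at $r=1$ — it is positive for $r<1$ and negative for $r>1$ when $\al<1/2$ — so positivity of $g$ is not manifest and forces one to combine the principal-value part with the residue contribution. To handle this I would recast the density in a single manifestly integrated form, via the partial fraction $f_\al(x)=\frac1{\sin(\pi\al)}\,\im\frac1{x^{1-\al}+e^{-i\pi\al}}$ followed by subordination to the positive $(1-\al)$-stable density $p^{(1-\al)}_v$ (normalized by $\int_0^\infty e^{-xu}p^{(1-\al)}_v(u)\,du=e^{-vx^{1-\al}}$), using $\frac1{x^{1-\al}+c}=\int_0^\infty e^{-cv}e^{-vx^{1-\al}}\,dv$ for $\re c>0$. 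This gives the single expression
\[
g(u)=\frac1{\sin(\pi\al)}\int_0^\infty e^{-v\cos(\pi\al)}\sin\bigl(v\sin(\pi\al)\bigr)\,p^{(1-\al)}_v(u)\,dv,
\]
equivalently $g(u)=\frac{u^{-\al}}{\sin(\pi\al)}\,\im E_{1-\al,1-\al}\bigl(u^{1-\al}e^{i(1-\al)\pi}\bigr)$ in terms of the Mittag--Leffler function. Proving that this signed mixture of stable laws stays nonnegative — a positivity that one expects to hold precisely when $1-\al\ge 1/2$, i.e. $\al\le 1/2$ — is the heart of the matter, and I would attack it through a careful sign analysis of $\im E_{1-\al,1-\al}$ along the ray $\arg=(1-\al)\pi$ rather than through any term-by-term estimate of the oscillatory integrand.
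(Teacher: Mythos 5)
Your setup is sound as far as it goes: the appeal to Laplace inversion, the computation of the boundary values $\im f_\al(re^{i\pi})$, the identification of the non-integrable singularity at $r=1$ coming from the pole of $f_\al$ at $z=-1$ on the cut, and the subordination identity $f_\al(x)=\frac1{\sin(\pi\al)}\im\lp x^{1-\al}+e^{-i\pi\al}\rp^{-1}$ leading to the Mittag--Leffler expression for $g$ are all correct. But the proposal stops exactly where the proposition begins: you never prove that $g\ge0$. You acknowledge this yourself (``the heart of the matter'') and offer only a plan --- a ``careful sign analysis of $\im E_{1-\al,1-\al}$ along the ray $\arg=(1-\al)\pi$'' --- which is essentially a restatement of the claim to be proved, since that imaginary part is, up to the positive factor $u^{-\al}/\sin(\pi\al)$, the density $g$ itself. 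As written, you have a representation of $g$, not its nonnegativity, so the proposition is not established.

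The paper closes this gap with three specific devices that are absent from your argument. First, instead of working at the critical exponent $t=1-\al$ (where, as you correctly note, one is forced into a principal value plus a residue at $z=-1$), it proves that $f_{\al,1-\al-\e}$ is $\CM$ for every small $\e>0$ and passes to the pointwise limit of $\CM$ functions; the regularization pushes the zeros of $P_\al(z^t)$ off the closed cut entirely, so the collapsed contour yields an honest, absolutely integrable jump density. Second, for that regularized density the quantity $\im\bigl[1/P_\al\bigl(x^{1-\al-\e}e^{i(1-\al-\e)\pi}\bigr)\bigr]$ changes sign exactly once on $(0,\infty)$ (negative, then positive), and an elementary lemma states that an integrable function with a single sign change of this type and nonpositive total integral has a nonpositive Laplace transform at every $\la\ge0$. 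Third, the total integral is shown to vanish by letting $c\to\infty$ in the Bromwich integral. None of these steps has an analogue in your Mittag--Leffler route: the weight $e^{-v\cos(\pi\al)}\sin(v\sin(\pi\al))$ changes sign infinitely often, so no one-sign-change argument applies there, and you are left with a positivity statement about $\im E_{1-\al,1-\al}$ on a ray that is at least as hard as the original problem. The $\e$-regularization combined with the single-sign-change lemma is the missing ingredient.
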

\begin{proof} The cases $\al=0$ and $\al=1/2$ are clear from the above, and we suppose henceforth $\al\in(0,1/2)$. Since the pointwise limit of CM functions remains CM -- see e.g. chapter 1 in \cite{Livre:Schilling}, it is enough to show 
that 
$$f_{\al,1-\al-\e}(x)\;=\;\frac1{x^{2(1-\al-\e)}+ 2\cos(\pi\al)x^{1-\al-\e}+1}$$
is CM for all $\e$ sufficiently small. Fix $\e > 0$ small enough and set $f =f_{\al,1-\al-\e}$ for simplicity. For all  $c, \la > 0$ we integrate the analytic function $e^{\la z} f(z)$ along the following contour $\mathscr C.$

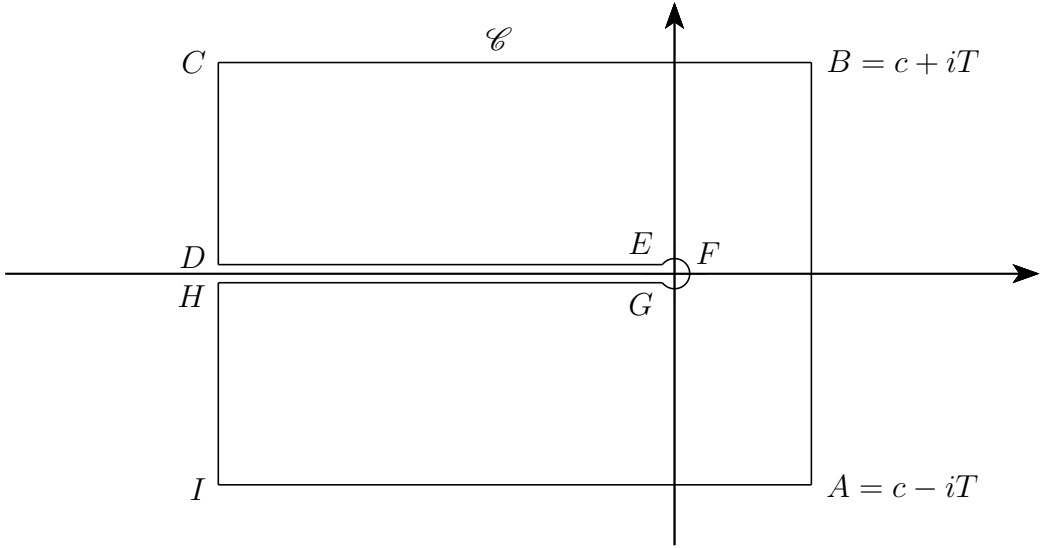
\begin{figure}[h]

\psset{unit=0.4cm 
,algebraic=true 
,linewidth=0.6pt, 
,arrowsize=6pt 2,arrowinset=0.25 
}

\begin{pspicture*}(-22,-9)(12,9) 
\psaxes[
,xAxis=true,yAxis=true
,Dx=40,Dy=20,ticksize=-12pt 12pt,subticks=1 
]
{->}(0,0)(-22,-9)(12,9)
\psline(4.5,-7)(4.5,7)
\psline(4.5,7)(-15,7)
\psline(-15,7)(-15,0.3)
\psline(-15,0.3)(-0.4,0.3)
\parametricplot{-2.4927751446322683}{2.4927751446322683}{0.5*cos(t)|0.5*sin(t)}
\psline(-0.4,-0.3)(-15,-0.3)
\psline(-15,-0.3)(-15,-7)
\psline(-15,-7)(4.5,-7)
\rput[bl](5,-7.5){$A=c-iT$}
\rput[ul](5,7){$B=c+iT$}
\rput[ur](-15.4,7){$C$}
\rput[br](-15.4,0.2){$D$}
\rput[br](-15.4,-1.1){$H$}
\rput[br](-15.4,-7.5){$I$}
\rput[ur](-0.7,1){$E$}
\rput[bl](-6.2,7.4){$\mathscr C$}
\rput[ur](-0.7,-1){$G$}
\rput[bl](0.7,0.32){$F$}
\end{pspicture*}
\caption{The contour $\mathscr C$}
\label{contour}
\end{figure}

\noindent
On the one hand, $|f(z)|\lesssim |z|^{-2(1-\al-\e)}$ as $|z|\to\infty$ and $f(z)\to1$ as $z\to0$. We deduce that
\[
\int_{[BC]\cup[CD]\cup\wideparen{EFG}\cup[HI]\cup[IA]}e^{\la z}f(z)dz\longrightarrow 0.
\]
On the other hand
\[
\frac1{2i\pi}\int_{[DE]\cup[GH]}e^{\la z}f(z)dz  \longrightarrow  \frac1\pi\int_0^{\infty}\im\left[\frac1{P_\al\left(x^{1-\al-\e} e^{i(1-\al-\e)\pi}\right)}\right]e^{-\la x}dx
\]
and
\[
\frac1{2i\pi}\int_{[AB]}e^{\la z}f(z)dz \longrightarrow \mathscr{L}^{-1}f(\la)
\]
where $\mathscr L^{-1}$ denotes the inverse Laplace transform -- see Proposition \ref{Prop:InvLaplace}. By Cauchy's theorem, this finally entails 
\[
\mathscr{L}^{-1}f(\la)=-\frac1\pi\int_0^{\infty}\im\left[\frac1{P_\al\left(x^{1-\al-\e} e^{i(1-\al-\e)\pi}\right)}\right]e^{-\la x}dx
\]
where $P_\al$ is defined by (\ref{Pal}).
To prove that $f$ is $\CM$ it suffices to show that $\mathscr{L}^{-1}f$ is a non negative function, which is equivalent to $$\forall\la\geq0,~\frac1\pi\int_0^{\infty}\im\left[\frac1{P_\al\left(x^{1-\al-\e} e^{i(1-\al-\e)\pi}\right)}\right]e^{-\la x}dx\le0.$$
Observe that
\[
\im\left(\frac1{P_\al(z)}\right) = -\frac{2\im(z)(\cos(\pi\al)+\re(z))}{|P_\al(z)|^2},
\]
so that the sign of $\im\left[1/{P_\al\left(x^{1-\al-\e} e^{i(1-\al-\e)\pi}\right)}\right]$ is negative on $(0,x_0)$ and positive on $(x_0,\infty)$ for some $x_0>0.$ The following lemma is elementary and its proof is left to the reader.

\begin{lemma}
Let $h:(0,\infty)\to\R$ an integrable function and suppose there exists $x_0>0$ such that $h$ is negative on $(0,x_0)$ and positive on $(x_0,\infty)$.
Then
\[
\int_0^{\infty}h(x)dx\le0 \;\Rightarrow \;\forall\lambda \ge0,~\int_0^\infty h(x)e^{-\la x}dx\le 0.
\]
\end{lemma}

Thus, by the lemma, it remains to show that 
$$\int_0^{\infty}\im\left[\frac1{P_\al\left(x^{1-\al-\e} e^{i(1-\al-\e)\pi}\right)}\right]dx\,\le\, 0.$$
Reasoning on the contour exactly as above, we have 
$$\int_0^{\infty}\im\left[\frac1{P_\al\left(x^{1-\al-\e} e^{i(1-\al-\e)\pi}\right)}\right]dx=-\frac1{2i\pi}\int_{\re(z)=c}f(z)dz\xrightarrow[c\to+\infty]{}0$$
Since the left part does not depend on $c>0$, this shows that it must equal $0,$ which finishes the proof of Proposition \ref{Prop:CM}.

\end{proof}

\begin{rem}\label{Remark:Kanter} {\em (a) By a well-known criterion -- see again chapter 1 in \cite{Livre:Schilling}, the function $f_{\al,t}$ is CM for all $\al \le 1/2, t \in [0, 1-\al].$ When $t\le 1/2,$ this property follows also from the immediate fact that $f_{\al,t}$ is the reciprocal of a Bernstein function, hence the Laplace transform of the potential measure of some subordinator -- see chapter 1 in \cite{Livre:Schilling} for details and terminology. On the other hand, the function $x\mapsto x^{2(1-\al)}+2\cos(\pi\al)x^{1-\al}+1$ is not Bernstein for $\al\le 1/2,$ so that the CM property of $f_\al$ cannot follow from this argument. \\

(b) One could ask if $f_\al$ is also a Stieltjes transform viz. the double Laplace transform of a positive measure, when $\al \le 1/2.$ The answer is however negative. Indeed, the Stieltjes inversion formula -- see chapter VIII Theorem 7.a in \cite{Livre:Widder} -- would entail
\[
m(dx)=-\frac1\pi\lim_{\e\to0^+} \im\left[f_\al(-x+i\e)\right]dx.
\]
and we can check that the right-hand side is not non-negative. Another way to see this is to use again the fact that $1/f_\al$ is not a Bernstein function, hence not a complete Bernstein function --~see chapter 6 in \cite{Livre:Schilling}.\\

(c) The Kanter factorization -- see Corollary 4.1 in \cite{Article:Kanter} -- reads
\[Z_\al^{-\al/(1-\al)}\; \elaw\; L \times Y_\al\]
where $L\sim\Exp(1)$ and $Y_\al$ is some independent random variable. This entails that 
$$T_\al^{1/(1-\al)}\; \elaw\; L \,\times \,Y_\al\, \times\,Z_\al^{-\al/(1-\al)}$$ 
has a density which is $\CM$, in other words that the function 
\[ x\mapsto \frac{x^{-\al}}{x^{2(1-\al)}+2\cos(\pi\al)x^{1-\al}+1}\]
is $\CM$. However, when $\al\le1/2$ this fact is weaker than Proposition \ref{Prop:CM}, which we do need in its full extent in order to apply the Pick criterion on Laplace transform of probability measures.}
\end{rem}

We can now finish the proof of the if part of the theorem. Fix $\al\in(0,1/2).$ By Proposition \ref{Prop:hcm2}, we need to show that $f(x)=f_{\al,1-\al-\e}(x)$ is HCM 
for $\e>0$ small enough. Fixing $\e >0,$ we saw during the proof of Proposition \ref{Prop:CM} that $f$ has an analytic continuation which does not vanish on $\C\setminus\R_-$. By Proposition \ref{Prop:Pick} and Proposition \ref{Prop:CM}, we hence need to check that 
\[\im(z)>0\;\Rightarrow \;\im(f'(z)/f(z))\ge 0.\]
The function $h= \im(f'/f)$ is defined on $\{z\in\C/ \im(z)>0 \}$ and is an harmonic function as the imaginary part of the analytic function $f'/f$. Besides, $h$ can be extended continuously to $\mathbb H=\{z\in\C/\im(z)\ge0\}$ and vanishes on $(0,\infty)$. Last, it is clear that $h(z)\to0$ as $|z|\to\infty$ uniformly on $\mathbb H$.
Hence, setting
$$m=\inf_{z\in\mathbb H} h(z),$$
we see that $m\in(-\infty,0]$. We will now prove that $m=0,$ which will finish the proof. On Figure \ref{FigureImff} we give two plots of the function $h$ along the lines $\{\im(z)=1\}$ and $\{\im(z)=0.1\}$ for $\al=1/5$ and $\e=1/10$.
\begin{figure}[h]
\psset{unit=1cm}
\begin{pspicture}(15,10.8)
\rput[bl](0,0){\includegraphics[width=15cm,height=11cm]{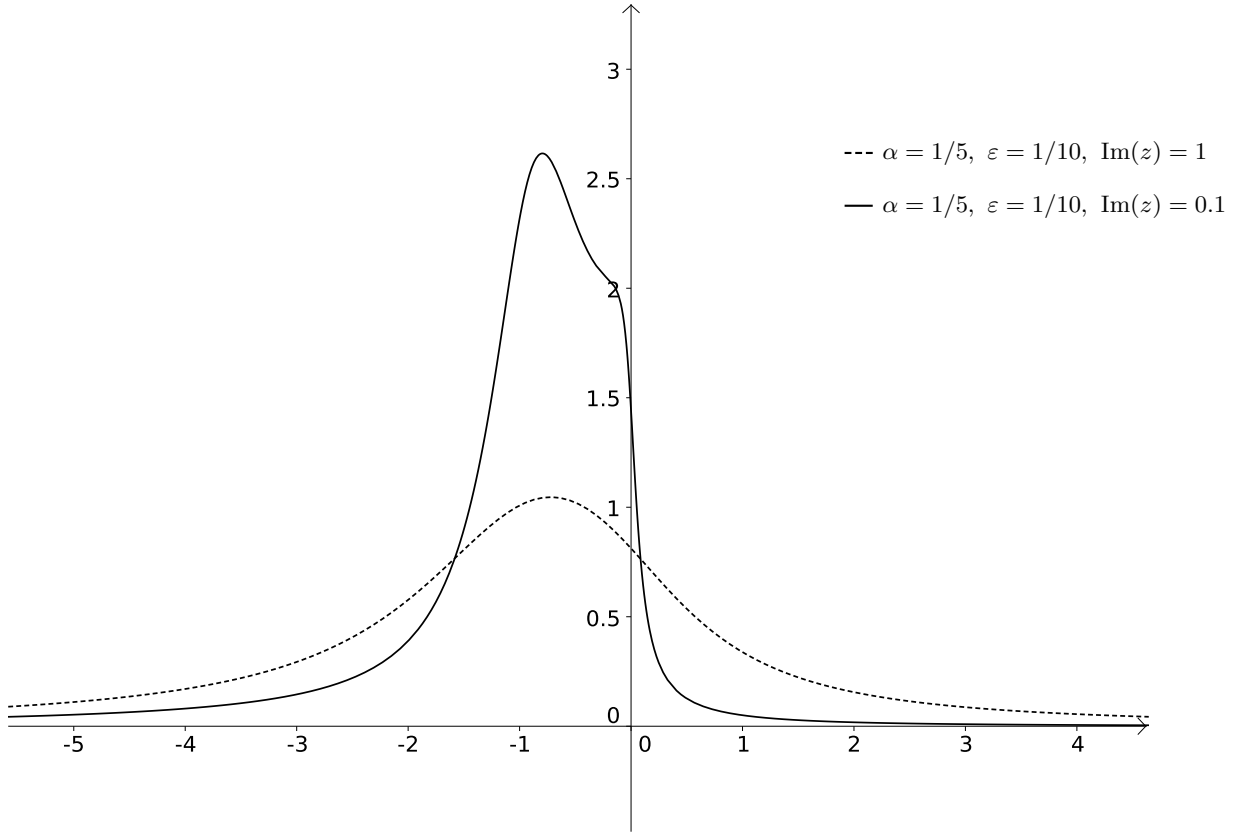}}
\rput[bl](11,9){\psline(0.09,0)\psline(0.14,0)(0.23,0)\psline(0.28,0)(0.37,0)}
\rput[bl](11,8.3){\psline(0.37,0)}
\begin{scriptsize}
\rput[l](11.5,9){$\alpha=1/5,~\e=1/10,~ \im(z)=1$}
\rput[l](11.5,8.3){$\alpha=1/5,~\e=1/10,~\im(z)=0.1$}
\end{scriptsize}
\end{pspicture}
\caption{Plot of $h$ along lines $\im(z)=c$}
\label{FigureImff}
\end{figure}

It is useless to check that $h(z)\ge0$ for all $z$ such that $\im(z)>0$. Applying the minimum principle to the harmonic function $h:\overset{\circ}{\mathbb H}\to\R$, the latter property follows as soon as $h(-x)\ge0$ for all $x>0.$ 
First, we compute, for all $z\in \mathbb H,$
\[h(z) \; = \; -2(1-\al-\e)\im\lp z^{-(\al+\e)} \frac{z^{1-\al-\e}+\cos(\pi\al)}{z^{2(1-\al-\e)}+2\cos(\pi\al)z^{1-\al-\e}+1}\rp.\]
Hence, setting $x=-\rho^{1/(1-\al-\e)}e^{i\pi}$ for some $\rho > 0$ we find
\begin{align*}
h\lp-x\rp & =- A \cdot \im\left[e^{-i(\al+\e)\pi} \lp\rho e^{i(1-\al-\e)\pi}+\cos(\pi\al) \rp\lp\rho^2e^{-i2(1-\al-\e)\pi}+2\cos(\pi\al)\rho e^{-i(1-\al-\e)\pi}+1\rp\right]\\
& =  - A \cos(\pi\al)\sin\lp(\al+\e)\pi\rp\left[ -\rho^2 + 2\frac{\cos\lp(\al+\e)\pi\rp}{\cos(\pi\al)}\rho-1 \right]\\
& = A \cos(\pi\al)\sin\lp(\al+\e)\pi\rp \left[\lp\rho-\frac{\cos\lp(\al+\e)\pi\rp}{\cos(\pi\al)}\rp^2+\underbrace{1-\lp\frac{\cos\lp(\al+\e)\pi\rp}{\cos(\pi\al)}\rp^2}_{>0}\right]
\end{align*}
with
\[A\;=\;\frac{2(1-\al-\e)\rho^{-(\al+\e)/(1-\al-\e)}}{\left| \rho^2e^{i2(1-\al-\e)\pi}+2\cos(\pi\al)\rho e^{i(1-\al-\e)\pi}+1\right|^2}\;\ge\; 0.\]
This completes the proof.

\qed

\begin{rem}\label{RHCM12} {\em (a) Writing
$$f_{\al,t}(uv)f_{\al,t}(u/v)=\frac1{u^{4t}+c^2u^{2t}+1+
c(u^{t}+u^{3t})w_{t} +u^{2t}w_{2t}}$$
with $w_{a}=v^{a}+v^{-a}$ for all $a \ge 0$ and using the fact that $w\mapsto w_{a}$ is a Berstein function when $a\in[0,1]$ -- see page 183 in \cite{Article:Bondesson:HCMGGC}, we see that the right-hand side is $\CM$ in $w$ for all $\al, t\le 1/2$. But again, this argument does not work for $t = 1-\al.$ \\

(b) The random variable defined as the independent product  
\[M_\al\; \elaw \; Z_\al\,\times\, L^{1/\al}\]
was introduced in \cite{Article:Pillai} under the denomination Mittag-Leffler random variable. In \cite{Article:AhnMcvinish} Corollary 3 and \cite{Article:JedidiSimon} Corollary 6 it is proved that $M_\al$ is HCM if and only if $\al\le1/2.$ In \cite{Article:JedidiSimon} Corollary 6 it is also shown that $M_\al$ is not hyperbolically monotone if $\al >1/2.$ As for our Theorem, it is natural to conjecture that $M_\al^\beta$ is $\HCM$ if and only if $\al\le1/2$ and $|\beta|\ge\al/(1-\al)$. \\

(c) Our result entails that the function $x\mapsto \log(x^{2t} + 2 \cos(\pi\al)x^t + 1)$ is a Thorin-Bernstein in the sense of chapter 8 in \cite{Livre:Schilling} if and only if $\al\in[0,1/2]$ and $t\in[0,1-\al].$ In other words, the function
$$x\mapsto\frac{x^{2t} +2\cos(\pi\al)x^t +1}{2x^{2t-1} + \cos(\pi\al)x^{t-1}}$$
is complete Bernstein function if and only if $\al\in[0, 1/2]$ and $t \in[0, 1-\al].$}

\end{rem}

\section{Further remarks}
\subsection{Complete monotonicity of $f_{\al,t}$} 

\label{sectioncm}

Set $\al\le 1/2.$ We know by Proposition \ref{Prop:CM} that the function $f_{\al,t}$ is $\CM$ for all $t\le1-\al$. Besides, this last constant $1-\al$ is optimal for the $\HCM$ property of $f_{\al,t}$ by our main result. Last, it is clear - see again chapter 1 in \cite{Livre:Schilling} - that there exists some $t_\al \ge 1-\al$ such that $f_{\al,t}$ is $\CM$ if and only if $t\le t_\al,$ and it is a natural question whether $t_\al =1-\al$ or not. The next proposition entails that $t_\al < 1.$

\begin{prop}\label{CM1}\label{Prop:NoCM}
The function $f_{\al,1}$ is not $\CM$ for any $\al \in (0,1).$
\end{prop}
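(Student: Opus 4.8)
The plan is to compute the inverse Laplace transform of $f_{\al,1}$ explicitly, observe that it is a damped sinusoid, and conclude from its change of sign — together with the uniqueness in Bernstein's theorem — that $f_{\al,1}$ cannot be $\CM$.

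First I would dispose of the range $\al\in(1/2,1)$ by an elementary remark. There $\cos(\pi\al)<0$, so $f_{\al,1}'(0^+)=-2\cos(\pi\al)>0$ and $f_{\al,1}$ is locally increasing at $0^+$; since every $\CM$ function is non-increasing on $(0,\infty)$, this already rules out the $\CM$ property. Equivalently, the two poles $-\cos(\pi\al)\pm i\sin(\pi\al)$ of $f_{\al,1}$ then lie in the open right half-plane, so $f_{\al,1}$ admits no analytic continuation to $\{\re(z)>0\}$, whereas the Laplace transform of a positive measure always does.

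For $\al\in(0,1/2]$ I would write $P_\al(x)=(x+\cos(\pi\al))^2+\sin^2(\pi\al)$ and note that, since $\sin(\pi\al)>0$, the function $f_{\al,1}$ is positive and real on $(0,\infty)$, analytic on $\{\re(z)>0\}$ (its poles now lying in the closed left half-plane), and $O(|z|^{-2})$ along each vertical line $\re(z)=c>0$. It therefore satisfies the hypotheses of Proposition \ref{Prop:InvLaplace}, so that $f_{\al,1}=\mathscr L g$ for the integrable function $g=\mathscr L^{-1}f_{\al,1}$. Using the elementary transform $\mathscr L[e^{-b\la}\sin(\omega\la)](s)=\omega\big/\big((s+b)^2+\omega^2\big)$ with $b=\cos(\pi\al)$ and $\omega=\sin(\pi\al)$, I would identify
\[
g(\la)=\frac{1}{\sin(\pi\al)}\,e^{-\cos(\pi\al)\la}\,\sin\big(\sin(\pi\al)\la\big).
\]

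Finally I would conclude as follows. The factor $\sin\big(\sin(\pi\al)\la\big)$ is strictly negative for $\la\in\big(\pi/\sin(\pi\al),\,2\pi/\sin(\pi\al)\big)$, so $g$ takes negative values there. If $f_{\al,1}$ were $\CM$, Bernstein's theorem would furnish a positive representing measure, which by injectivity of the Laplace transform would have to coincide with $g(\la)\,d\la$ — a contradiction. Hence $f_{\al,1}$ is not $\CM$ for any $\al\in(0,1)$. The only mildly technical point is the justification of the inversion, namely the contour estimates and the vanishing of the arc contributions; but this is exactly the computation already carried out in the proof of Proposition \ref{Prop:CM} and is routine here. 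The genuine content of the argument is the identification of the damped sine and the observation that it changes sign.
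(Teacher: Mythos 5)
Your proof is correct and follows essentially the same route as the paper: both identify the inverse Laplace transform of $f_{\al,1}$ as the damped sinusoid $e^{-\la\cos(\pi\al)}\sin(\la\sin(\pi\al))/\sin(\pi\al)$ and conclude from its sign changes that no non-negative Bernstein representing measure can exist. The only difference is that you read this function off by completing the square in $P_\al$ and invoking a standard transform pair, whereas the paper obtains it by a residue computation around the rectangle of Figure \ref{contour}; your separate dismissal of $\al>1/2$ via the local increase at $0^+$ is a harmless extra.
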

\begin{proof}
Computing the residues of the function $z\mapsto e^{\la z}f_{\al,1}(z)$ around the rectangle~$ABCI$ of Figure \ref{contour} entails that
\[
\frac1{2i\pi}\int_{\re(z)=c} e^{\la z}f_{\al,1}(z)dz=e^{-\la a}\frac{\sin(\la b)}b
\]
with $a=\cos(\pi\al)$ and $b=\sin(\pi\al)$. Therefore, $({2i\pi})^{-1}\int_{\re(z)=c} e^{\la z}g(z)dz$ does not have a non-negative sign for all $\la>0$.
\end{proof}

The author believes that the critical index $t_\al$ should belong to the open interval $(1-\al,1),$ but he is currently unable to prove that, neither to conjecture an explicit formula for $t_\al.$ Observe that Proposition \ref{Prop:CM} shows that $T_\al^{1/(1-\al)}$ is  a gamma mixture with shape parameter $1-\al$. In other words we have the independent factorization
\[
T_\al^{1/(1-\al)}\; \elaw\; \ga_{1-\al}\times Y_\al
\]
where $Y_\al$ is some positive random variable. More generally, it is easy to see that $f_{\al,t}$ is $\CM$ if and only if $T_\al^{1/t}$ is a gamma mixture with shape parameter $t$, which means that the function
\[
s \mapsto \frac{\Ga\lp1-\frac st\rp \Ga\lp1+\frac st\rp \Ga(t)}{\Ga\lp1-\frac{\al s}t\rp \Ga\lp1+\frac {\al s}t\rp \Ga(t+s)}
\]
is the Mellin transform of some probability distribution. However, it is not easy to prove directly this latter property.
  
\subsection{$\GGC$ property for $T_\al^\beta$} 

\label{Section:GGCZaZa}

From the considerations on pp. 49-51 in \cite{Livre:Bondesson}, we observe that 
\[
T_\al^\beta\;\mbox{is a}\; \GGC~\Longrightarrow~f_{\al,1/\beta}\;\mbox{is}\;\CM
\]
for all $\beta \ge 0.$ In particular, the drifted half-Cauchy $T_\al$
is not a $\GGC$ because $f_{\al,1}$ is not $\CM$, which was already mentioned above. This also entails that $T_\al^\beta$ is not a GGC for any value of $\beta$ when $\al\in(1/2,1),$ since then $f_{\al, 1/\beta}$ is locally increasing in a neighbourhood of $0$. However, when $\al\le1/2$ it does not seem easy to characterize the $\GGC$ property for $T_\al^\beta.$ We believe that there exists some constant $\beta_\al = 1/t_\al$ such that for all $\beta > 0$ the random variable $T_\al^\beta$ is $\GGC$ if and only if $\beta\ge\beta_\al$. In general, the following conjecture from Bondesson \cite{Bondesson:2013}, which would at least entail the existence of $\beta_\al:$
\[
X\;\mbox{is a}\; \GGC\Longrightarrow X^\delta\;\mbox{is a}\; \GGC\; \forall\delta\ge1,
\]
is still open.

\bigskip
\noindent
{\bf Acknowledgements.} The author is grateful to his PhD adviser Thomas Simon for his help during the preparation of this paper. He is also grateful to Lennart Bondesson for the interest he took in this work and some useful comments.


\begin{thebibliography}{10}

\bibitem{Article:AhnMcvinish}
V.~V. Anh and R.~McVinish.
\newblock Completely monotone property of fractional {G}reen functions.
\newblock {\em Fract. Calc. Appl. Anal.}, 6(2):157--173, 2003.

\bibitem{Bondesson:2013}
L.~Bondesson.
\newblock A class of probability distributions that is closed with respect to
  addition as well as multiplication of independent random variables.
\newblock {\em \textnormal{To appear in }Journal of Theoretical Probability}.

\bibitem{Article:Bondesson:IDC}
L.~Bondesson.
\newblock On the infinite divisibility of the half-{C}auchy and other
  decreasing densities and probability functions on the nonnegative line.
\newblock {\em Scand. Actuar. J.}, (3-4):225--247, 1987.

\bibitem{Article:Bondesson:HCMGGC}
L.~Bondesson.
\newblock Generalized gamma convolutions and complete monotonicity.
\newblock {\em Probab. Theory Related Fields}, 85(2):181--194, 1990.

\bibitem{Livre:Bondesson}
L.~Bondesson.
\newblock {\em Generalized gamma convolutions and related classes of
  distributions and densities}, volume~76 of {\em Lecture Notes in Statistics}.
\newblock Springer-Verlag, New York, 1992.

\bibitem{Report:Bondesson}
L.~Bondesson.
\newblock A problem concerning stable distributions.
\newblock Technical report, Uppsala University, 1999.

\bibitem{Livre:ChaumontYor}
L.~Chaumont and M.~Yor.
\newblock {\em Exercises in probability}.
\newblock Cambridge Series in Statistical and Probabilistic Mathematics.
  Cambridge University Press, Cambridge, second edition, 2012.
\newblock A guided tour from measure theory to random processes, via
  conditioning.

\bibitem{Article:Didier}
A.~Di{\'e}dhiou.
\newblock On the self-decomposability of the half-{C}auchy distribution.
\newblock {\em J. Math. Anal. Appl.}, 220(1):42--64, 1998.

\bibitem{Livre:Erdelyi2}
A.~Erd{\'e}lyi, W.~Magnus, F.~Oberhettinger, and F.~G. Tricomi.
\newblock {\em Higher transcendental functions. {V}ol. {II}}.
\newblock Robert E. Krieger Publishing Co. Inc., Melbourne, Fla., 1981.
\newblock Based on notes left by Harry Bateman, Reprint of the 1953 original.

\bibitem{Preprint:Fourati}
S.~{Fourati}.
\newblock $\alpha$-stable densities are {HCM} for $\alpha\in (0,1/4] \cup
  [1/3,1/2]$.
\newblock {\em ArXiv e-prints}, September 2013.
\newblock 1309.1045.

\bibitem{Article:JedidiSimon}
W.~Jedidi and T.~Simon.
\newblock Further examples of {GGC} and {HCM} densities.
\newblock {\em Bernoulli}, 19(5A):1818--1838, 2013.

\bibitem{Article:Kanter}
M.~Kanter.
\newblock Stable densities under change of scale and total variation
  inequalities.
\newblock {\em Ann. Probability}, 3(4):697--707, 1975.

\bibitem{Article:Pillai}
R.~N. Pillai.
\newblock On {M}ittag-{L}effler functions and related distributions.
\newblock {\em Ann. Inst. Statist. Math.}, 42(1):157--161, 1990.

\bibitem{Livre:Sato}
K.~Sato.
\newblock {\em L\'evy processes and infinitely divisible distributions},
  volume~68 of {\em Cambridge Studies in Advanced Mathematics}.
\newblock Cambridge University Press, Cambridge, 1999.
\newblock Translated from the 1990 Japanese original, Revised by the author.

\bibitem{Livre:Schilling}
R.~L. Schilling, R.~Song, and Z.~Vondra{\v{c}}ek.
\newblock {\em Bernstein functions}, volume~37 of {\em de Gruyter Studies in
  Mathematics}.
\newblock Walter de Gruyter \& Co., Berlin, 2010.
\newblock Theory and applications.

\bibitem{Article:Simon:MSU}
T.~Simon.
\newblock Multiplicative strong unimodality for positive stable laws.
\newblock {\em Proc. Amer. Math. Soc.}, 139(7):2587--2595, 2011.

\bibitem{Article:Thorin:Bondesson}
O.~Thorin.
\newblock Proof of a conjecture of {L}. {B}ondesson concerning infinite
  divisibility of powers of a gamma variable.
\newblock {\em Scand. Actuar. J.}, (3):151--164, 1978.

\bibitem{Livre:Widder}
D.~V. Widder.
\newblock {\em The {L}aplace {T}ransform}.
\newblock Princeton Mathematical Series, v. 6. Princeton University Press,
  Princeton, N. J., 1941.

\bibitem{Article:Zolotarev}
V.~M. Zolotarev.
\newblock Mellin-{S}tieltjes transformations in probability theory.
\newblock {\em Teor. Veroyatnost. i Primenen.}, 2:444--469, 1957.

\bibitem{Livre:Zolotarev}
V.~M. Zolotarev.
\newblock {\em One-dimensional stable distributions}, volume~65 of {\em
  Translations of Mathematical Monographs}.
\newblock American Mathematical Society, Providence, RI, 1986.
\newblock Translated from the Russian by H. H. McFaden, Translation edited by
  B. Silver.

\end{thebibliography}
\end{document}